\patchcmd{\section}{\scshape}{\bfseries}{}{}
\renewcommand{\@secnumfont}{\bfseries}
\newtheorem{introtheorem}{Theorem}
\theoremstyle{plain}
\newtheorem{theorem}{Theorem}[section]
\newtheorem{proposition}[theorem]{Proposition}
\newtheorem{lemma}[theorem]{Lemma}
\newtheorem{kellylemma}[theorem]{Kelly's Lemma}
\newtheorem{corollary}[theorem]{Corollary}
\newtheorem{conjecture}[theorem]{Conjecture}
\theoremstyle{definition}
\newtheorem{definition}[theorem]{Definition}
\newtheorem{notation}[theorem]{Notation}
\newtheorem{remark}[theorem]{Remark}
\newtheorem{question}[theorem]{Question}
\newcommand{\oset}[3][0ex]{%
  \mathrel{\mathop{#3}\limits^{
    \vbox to#1{\kern-\ex@
    \hbox{$\scriptstyle#2$}\vss}}}}
\def\la#1{\oset{\vspace*{-5mm}\shortleftarrow}{#1}}
\def\ra#1{\oset{\shortrightarrow}{#1}}
\renewcommand{\bar}{\overline}
\DeclareMathOperator{\Z}{\mathbf{Z}}
\DeclareMathOperator{\R}{\mathbf{R}}
\DeclareMathOperator{\C}{\mathbf{C}}
\DeclareMathOperator{\E}{\mathbf{E}}
\DeclareMathOperator{\PF}{\mathrm{PF}}
\begin{document}

\date{\today\ (version 2.0)} 
\title[Edge reconstruction of the Ihara zeta function]{Edge reconstruction\\[1mm]  of the Ihara zeta function}
\author[G.~Cornelissen]{Gunther Cornelissen}
\address{\normalfont Mathematisch Instituut, Universiteit Utrecht, Postbus 80.010, 3508 TA Utrecht, Nederland}
\email{g.cornelissen@uu.nl}
\author[J.~Kool]{Janne Kool\\ (with an appendix by Daniel McDonald)}
\address{\normalfont  Max-Planck-Institut f\"ur Mathematik, Postfach 7280 53072 Bonn, Deutschland}
\curraddr{\normalfont Kognitive Systemer, DTU Compute, Dansk Tekniske Universitet, B321, DK-2800 Lyngby}
\email{jankoo@dtu.dk}
\thanks{Part of this work was done while the first author visited Warwick (made possible by Richard Sharp) and the Hausdorff Institute in Bonn, and while the second author visited the Max-Planck-Institute in Bonn. We thank Tom Kempton and Matilde Marcolli for stimulating discussions and Toshikazu Sunada for useful comments. The proof of Lemma \ref{pol} is a simplification of our original argument, due to Merlijn Staps.}

\subjclass[2010]{05C50, 05C38, 11M36, 37F35, 53C24}
\keywords{\normalfont Graph, edge reconstruction conjecture, Ihara zeta function, non-backtracking walks}

\begin{abstract} \noindent We show that if a graph $G$ has average degree $\bar d \geq 4$, then the Ihara zeta function of $G$ is edge-reconstructible. We prove some general spectral properties of the edge adjacency operator $T$: it is symmetric for an indefinite form and has a ``large'' semi-simple part (but it can fail to be semi-simple in general).  We prove that this implies that if $\bar d>4$, one can reconstruct the number of non-backtracking (closed or not) walks through a given edge, the Perron-Frobenius eigenvector of $T$ (modulo a natural symmetry), as well as the closed walks that pass through a given edge in both directions at least once.

The appendix by Daniel MacDonald established the analogue for multigraphs of some basic results in reconstruction theory of simple graphs that are used in the main text. 
\end{abstract}

\maketitle


\section*{Introduction} 
Let $G=(V,E)$ denote a graph with vertex set $V$ and edge set $E$, consisting of unordered pairs of elements of $V$. The \emph{edge deck} $\mathcal{D}^e(G)$ of $G$ is the multi-set of all edge-deleted subgraphs of $G$, as unlabelled graphs. Harary \cite{Harary} conjectured in 1964 that graphs on at least four edges are edge-reconstructible, i.e., determined up to isomorphism by their edge deck. 
This so-called \emph{edge reconstruction conjecture} is the analogue for edges of the famous vertex reconstruction conjecture of Kelly and Ulam that every graph on at least three vertices is determined by its (similarly defined) vertex deck (compare \cite{Bondy}). Many invariants of graphs were shown to be reconstructible from the vertex and/or edge deck. From the large literature on the subject, we quote the following three sources that are most relevant in the context of our results: (a) vertex-reconstruction of the characteristic polynomial of the vertex adjacency matrix by Tutte \cite{Tutte}; (b) vertex-reconstruction of the number of (possibly backtracking) walks of given length through a given vertex $v \in V$ (which one can specify without knowing the graph $G$ by pointing to the element $G-v$ of the vertex deck) by Godsil and McKay \cite{Godsil}; (c) edge reconstruction for graphs with average degree $\bar d \geq  2 \log_2 |V|$ by Vladim\'{\i}r M\"uller \cite{Muller}, improving upon a method of Lov\'asz \cite{Lovasz}. 

Following the discussion by McDonald in the appendix to the current paper, the edge reconstruction conjecture should also hold for multigraphs in the formulation of Conjecture \ref{ercm}. Since disconnected (multi)graphs are reconstructible (see (\cite{Bondy},Corollary 6.14(b)) and \ref{appcor}(\ref{c4})), we may assume that $G$ is connected. An edge with equal ends is called a loop. The degree of a vertex is the number of edges to which it belongs, where, as usual, a loop is counted twice. The average degree $\bar d$ of $G$ then equals $$\bar d = \frac{1}{|V|} \sum_{v \in V} \deg v = 2 \frac{|E|}{|V|}.$$ A degree-one vertex is called an end-vertex. All results in this paper hold for \emph{connected finite undirected multigraphs without end-vertices}, and from now on we will use the word ``graph'' for such multigraphs.

If $e=\{v_1,v_2\} \in E$, we denote by $\ra{e}\ =(v_1,v_2)$ the edge $e$ with a chosen orientation, and by $\la{e}\ =(v_2,v_1)$ the same edge with the inverse orientation to that of $\ra{e}$. Let $o(\ra{e})=v_1$ denote the origin of $\ra{e}$ and $t(\ra{e})=v_2$ its end point. If there are multiple edges between $v_1,v_2$ then we will label them $e_i=(v_1,v_2)_i$. A non-backtracking edge walk of length $n$ is a sequence $e_1e_2....e_n$ of edges such that $t(e_i)=o(e_{i+1})$, but $\la{e_{i+1}}\not=\ra{e_{i}}$. We call it \emph{tailless} if $\la{e_n} \neq \ra{e_1}$. Just like walks in the graph can be studied using the adjacency matrix, non-backtracking walks are captured by the  \emph{edge adjacency matrix} $T=T_G$ studied by Sunada \cite{Sunada1}, Hashimoto \cite{Hashimoto} and Bass \cite{Bass}. Letting $\E$ denote the set of oriented edges of $G$ for any possible choice of orientation, so $|\E|=2|E|$, $T$ is defined to be the $2|E|\times2|E|$ matrix, in which the rows and columns are indexed by $\E$, and 
$$T_{\ra{e_1},\ra{e_2}}= \left\{ \begin{array}{l} 1 \mbox{ if }t(\ra{e_1})=o(\ra{e_2}) \mbox{ but }\ra{e_2}\ \neq\ \la{e_1}; \\ 0 \mbox{ otherwise}. \end{array} \right.$$  If $r \in \Z_{\geq 1}$, the entry $(T^r)_{\ra{e_1},\ra{e_2}}$ is the number of non-backtracking walks of length $r$ on $G$ that start in the direction of $\ra{e_1}$ and end in the direction of $\ra{e_2}$. As for the usual adjacency matrix, graphs can have the same eigenvalues for $T$ without being isomorphic (\cite{Terras}, Chapter 21). 

We will denote the unit square matrix of size $n \times n$ by $1_n$ or simply $1$ if no confusion can arise. 
The matrix $T$ is related to the \emph{Ihara zeta function} $\zeta_G$ of $G$  \cite{Ihara}, defined as the following analogue of the Selberg zeta function from differential geometry (cf.\ \cite{Terras}, Part I): \begin{equation} \label{sel1} \zeta_G(u) := \prod_{p} (1-u^{\ell(p)})^{-1}, \end{equation} where the product runs over classes of non-backtracking 
tailless closed oriented \emph{prime} walks $p$ in $G$ of length $\ell(p)$,``class'' refers to not having a distinguished starting point, and ``prime'' refers to not being a multiple of another walk. The function $\zeta_G(u)$ is a formal power series in $u$, but it is also convergent as a function of the complex variable $u$ for $|u|$ sufficiently small. We have an identity (\cite{Bass}, II.3.3) \begin{equation} \label{sel2} \zeta^{-1}_G(u)=\det(1-Tu)=u^{2|E|} \det(u^{-1}-T),\end{equation} showing that $\zeta_G$ has an analytic continuation to the entire complex plane as a rational function with finitely many poles. If one so wishes, one may take Equation (\ref{sel2}) as a definition of $\zeta_G$; in this paper, the original definition as in Equation (\ref{sel1}) will play no role. 

In the case considered in the theorem below, the matrix $T$ has a unique maximal real positive eigenvalue called the \emph{Perron-Frobenius} eigenvalue. The corresponding eigenvector $ \mathbf{p} \in \R^{\E}$ such that $\sum_{e \in E} \mathbf{p}_{\ra{e}} \mathbf{p}_{\la{e}} = 1/2$ is called the \emph{normalized Perron-Frobenius eigenvector}.

We will prove the following:

\begin{introtheorem} \label{main} Let $G$ denote a graph of average degree $\bar d$. The following are edge-reconstructible:
\begin{enumerate}
\item[\textup{(i)}] If $\bar d \geq 4$, the Ihara zeta function $\zeta_G$ of $G$, i.e., the spectrum of the edge-adjacency matrix $T$; in particular, the Perron-Frobenius eigenvalue $\lambda_{\PF}$ of $T$;
\item[\textup{(ii)}] If $\bar d \geq 4$, the number $N_r$ of  non-backtracking closed walks on $G$ of given length $r$;
\item[\textup{(iii)}] If $\bar d > 4$, the functions
\begin{enumerate}
\item[\textup{(a)}] $N_r \colon \mathcal{D}^e(G) \rightarrow \Z$ that associates to an element $G-e$ of the edge deck of $G$ the number $N_r(e)$ of  non-backtracking closed
walks on $G$ of given length $r$ passing through $e$;
\item[\textup{(b)}] $M_r \colon \mathcal{D}^e(G) \rightarrow \Z$ that associates to an element $G-e$ of the edge deck of $G$ the number $M_r(e)$ of  non-backtracking (not necessarily closed)
walks on $G$ of given length $r$ starting at $e$ (in any direction);
\end{enumerate}
\item[\textup{(iv)}] If $\bar d > 4$, the function $\mathcal{D}^e(G) \rightarrow \binom{\R}{2}$ (where $\binom{\R}{2}$ is the set of unordered pairs of real numbers) that associates to an element $G-e$ of the edge deck the unordered pair $\{\mathbf{p}_{\ra{e}},\mathbf{p}_{\la{e}}\}$ of entries of the normalized Perron-Frobenius eigenvector $\mathbf{p}$ of $T$;

\item[\textup{(v)}]  If $\bar d> 4$, the function $F_r \colon \mathcal{D}^e(G) \rightarrow \Z$ that associates to an element $G-e$ of the edge deck $\mathcal{D}^e(G)$ of $G$ the number of non-backtracking closed walks on $G$ of given length $r$ that pass through $e$ in both directions at least once.
\end{enumerate}
Furthermore, if $G$ is bipartite, then \textup{(iii)-(v)} also hold for $\bar d = 4$.
\end{introtheorem}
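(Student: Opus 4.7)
My plan is to deduce the bipartite case at $\bar d = 4$ by leveraging the extra spectral symmetry that $T$ acquires on bipartite graphs. First, I note that under the hypothesis $\bar d \geq 4$ already in force, bipartiteness itself is edge-reconstructible: by part (i) the spectrum of $T$ is known, and $G$ is bipartite iff $N_r = 0$ for all odd $r$, equivalently iff this spectrum is invariant under $\lambda \mapsto -\lambda$. Indeed for a bipartite $G$ with colour classes $V = V_0 \sqcup V_1$, the oriented-edge set $\E$ splits as $\E_{01}\sqcup\E_{10}$ with $T$ interchanging the two halves, so $T^2$ is block diagonal and the spectrum of $T$ is $\pm$-symmetric; conversely this symmetry forces bipartiteness.

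Assuming bipartiteness, I would use parity to halve the number of independent unknowns in the combinatorial system used in the $\bar d > 4$ proofs of (iii)--(v). The counts $N_r(e)$, $M_r(e)$ and $F_r(e)$ all vanish for $r$ of the wrong parity, and the Perron-Frobenius eigenvector of $T$ on a bipartite graph is fully determined by its restriction to $\E_{01}$ (its values on $\E_{10}$ being obtained from the edge interchange combined with the involution $\ra{e}\leftrightarrow\la{e}$). In particular, the unordered pair $\{\mathbf{p}_{\ra{e}},\mathbf{p}_{\la{e}}\}$ of (iv) inherits an additional structural constraint from the bipartition. These parity vanishings supply additional, ``trivial'' linear relations in the reconstruction system beyond those available in the generic case.

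Finally, I would revisit the proofs of (iii)--(v) for $\bar d > 4$ and locate the point at which the strict inequality enters. Presumably the argument reduces to a Kelly-type inversion in which a factor like $|E|-2|V|$ (i.e.\ $\bar d - 4$ times $|V|/2$) must be strictly positive in order to recover $N_r(e)$, $M_r(e)$ and $F_r(e)$ from the averaged quantities $\sum_e N_r(G-e)$, etc. Restricting the relevant system to even $r$ is lossless in the bipartite case by the previous step, and adjoining the known odd-$r$ vanishings tightens the invertibility threshold from $\bar d > 4$ to $\bar d \geq 4$. The chief obstacle will be to verify quantitatively that the parity cancellation exactly compensates for the boundary case $\bar d = 4$, and in particular to check that the bipartite decomposition of $T$ preserves the Kre\u{\i}n-symmetry and ``large semi-simple part'' properties on which the $\bar d > 4$ arguments rely; a secondary concern is that the Perron-Frobenius eigenvector may not be uniquely normalizable once $T$ splits into two interchanged blocks, so the statement in (iv) must be interpreted relative to the natural sign symmetry in the bipartite spectrum.
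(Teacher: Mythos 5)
Your proposal addresses only the final sentence of the theorem (that (iii)--(v) hold for bipartite $G$ with $\bar d = 4$), taking the $\bar d > 4$ cases as given; it does not touch the main body of the statement, so I will focus on the bipartite improvement.

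You correctly intuit that the strict inequality $\bar d > 4$ enters through a quantitative threshold involving $|E| - 2|V|$. In the paper this appears as the bound $M - 1 \leq 2|V| < |E|$, where $M$ is the sum over distinct eigenvalues of $T$ of the sizes of the largest Jordan blocks. Lemma~\ref{J} shows that $N_r(e)$ for all $r$ is determined by its values for $r \leq M - 1$ via an invertible confluent-alternant system $\mathbb{V} Y = \mathbf{N}$, and Kelly's Lemma supplies $N_r(e)$ only for $r < |E|$; hence one needs $M - 1 < |E|$. The bound on $M$ comes from Corollary~\ref{mul}: $\pm 1$ are \emph{semi-simple} eigenvalues of multiplicities $|E| - |V| + 1$ and $|E| - |V| + 1 - p$, so each contributes only $1$ to $M$, and the remaining eigenvalues contribute at most their total multiplicity $2|V| - 2 + p$, giving $M - 1 \leq 2|V| - 1 + p$. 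The paper's bipartite improvement is then almost tautological: $p = 0$, so $M - 1 \leq 2|V| - 1$, which is $< |E|$ already when $|E| = 2|V|$, i.e.\ $\bar d = 4$. The mechanism is the \emph{enlarged $(-1)$-eigenspace} of Proposition~\ref{hplus}, not parity vanishing.

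Your proposed mechanism --- inserting the known vanishings $N_r(e) = M_r(e) = F_r(e) = 0$ for odd $r$ as extra linear relations --- does not close the gap. The single missing datum at the $\bar d = 4$ boundary is $N_{|E|}(e)$ (Kelly gives $r < |E|$, and the generic bound gives $M - 1 \leq 2|V| = |E|$); but at $\bar d = 4$ one has $|E| = 2|V|$ \emph{even}, so parity says nothing about $N_{|E|}(e)$. The ``trivial'' vanishing relations simply do not land on the index that is missing. You also flag, but do not resolve, a genuine subtlety in (iv): for bipartite $G$ the operator $T$ is irreducible but $2$-periodic, so the Ces\`aro-average argument requires care (the paper's Ces\`aro limit is exactly what handles periodicity); your suggestion to restrict $\mathbf p$ to $\E_{01}$ and recover the rest by symmetry is not needed and would require extra justification. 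In short, you have identified the right place where $\bar d > 4$ is used, but the tool you propose (parity) is the wrong one; the correct tool is the extra dimension of $\ker(1+T)$.
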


Statements (iii)-(v) in the theorem make sense, since if $G-e \cong G-e'$, the functions turn out to have the same value at $e$ and $e'$ (cf.\ Remark \ref{mn}). 

We indicate briefly how to prove these results. Deleting an edge from the graph corresponds to deleting \emph{two} rows and columns from the matrix $T$, namely, those corresponding to the two possible orientations of the edge. The proof of (i) starts with a lemma on the combinatorial reconstruction of the top half of the coefficients of $\det(\lambda-T)$ from {second} derivatives of $2 \times 2$-minors of $T$ (Section \ref{poly}). The next step in the proof is to exploit certain relations between the coefficients in $\det(\lambda-T)$ which arise from a formula of Bass that relates $\det(\lambda-T)$ to a polynomial of degree $2|V|$---there are enough relations to reconstruct all coefficients if the stated condition on the average degree holds (Section \ref{Sbass}; in a sense, this is an analogue of the ``functional equation'' for the Ihara zeta function of a \emph{regular} graph). Part (ii) follows by expressing the formal logarithm of the Ihara zeta function as a counting function for such closed walks. Alternatively, one may take this expression as a starting point of the proof, reduce the problem in (i) to that of counting closed walks of length $<|E|$, and use Kelly's Lemma \ref{Kellys}. The proof of (iii) uses the Jordan normal form decomposition for the matrix $T$, the non-vanishing of an associated ``confluent alternant'' determinant and the fact that $T$ has a ``large'' semi-simple part to reduce the counting problem to length $<|E|$, which then again is done by purely combinatorial means. In case of non-closed walks, this also involves identities based on decomposition of walks into closed and non-returning walks. On the way, we prove some further spectral properties of $T$, e.g., that it is symmetric w.r.t.\ an indefinite quadratic form (Proposition \ref{krein}), and we give an explicit description of its $\pm 1$ eigenspaces in terms of certain spaces of cycles on the graph (Propositions \ref{h} and \ref{hplus}). We also point out that the presence of end-vertices in the graph leads to a non-semi-simple $T$-operator (Proposition \ref{sinkjordan}), so $T$ is, in general, not diagonalisable.  Part (iv) follows from studying Ces\`aro averages of powers of non-negative matrices. Finally, part (v) follows by using an identity of Jacobi for $2\times 2$-sub-determinants.

Two \emph{open problems} that arise from the proofs and that we want to highlight are the following: (a) can the Ihara zeta function $\zeta_G$ be reconstructed from the (multi-)set $\{\zeta_{G-e} \colon e \in E \}$ of Ihara zeta functions of edge-deleted graphs?; (b) for $|E| \geq 2$, is $T$ semi-simple if and \emph{only if} $G$ has an end-vertex?.

We finish this introduction by listing some applications. 
As we explain in \cite{CK} (cf.\ also \cite{CM}), the invariants that we have reconstructed play a central role in the  measure-theoretical study of the action of the fundamental group on the boundary of the universal covering tree of the graph. More precisely, the fundamental group $\Gamma$ of $G$, a free group of rank the first Betti number $b>1$ of $G$, acts on the boundary of the universal covering tree of $G$. This dynamical system ``remembers'' only $b$, since it is topologically conjugate to the action of the free group of rank $b$ on the boundary of its Cayley graph. However, the graph is uniquely determined by a measure on the boundary, namely, the pull-back of the Patterson-Sullivan measure for the action of $\Gamma$ on the boundary. For this measure, the boundary has Hausdorff dimension $\log \lambda$, where $\lambda$ is the Perron-Frobenius eigenvalue of $T$, and the measure itself is expressed on a set of generators for $\Gamma$ in terms of $\lambda$, the entries of the Perron-Frobenius eigenvector of $T$, and the lengths of the loops corresponding to the generators. 

In \cite{SpectralRedemption}, the operator $T$ is used for spectral algorithms that detect clustering in large graphs. This is a hard problem if the graphs under consideration are sparse with widely varying degrees, and the authors argues that use of the operator $T$ outperforms classical algorithms based the spectrum of the adjacency or Laplacian operator. Since the input for their clustering algorithm consists of the two leading eigenvalues of $T$, our main theorem shows reconstruction of this input (if $\bar d \geq 4$). 

In the theory of evolution of species, it has recently been argued that evolutionary relations are not always tree-like \cite{Iersel}. Thus, the phylogenetic reconstruction problem should be considered in the context of general multigraphs, rather than the more traditional case of trees, and our theorem gives a theoretical underpinning for this more general question of reconstruction.

\section{A lemma on polynomial coefficients}\label{poly}

\begin{notation} If $P$ is a single valued polynomial in the variable $\lambda$, let $[\lambda^d]P$ denote the coefficient of $\lambda^d$ in $P$. 
\end{notation} 

\begin{theorem} \label{tfirst} For $d=|E|+1,\dots,2|E|$, the coefficients $[\lambda^d]\det(\lambda-T_G)$ of the characteristic polynomial of the edge adjacency matrix $T_G$ of a graph $G$ are reconstructible from the edge deck $\mathcal{D}^e(G)$. More precisely, 
\begin{equation} \label{booh}
[\lambda^d]\det(\lambda-T_G) = \sum_{r =1}^{\lfloor \frac{d}{2}\rfloor} (-1)^{r+1} \sum_{i_1<i_2<\dots<i_r} [\lambda^{d-2r}]\det(\lambda-T_{G-e_{i_1}\dots-e_{i_r}}). 
\end{equation}
\end{theorem}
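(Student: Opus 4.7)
The plan is to reduce identity (\ref{booh}) to a finite inclusion--exclusion identity on principal minors of $T_G$. The key observation is that the entry $T_{\ra{e_1},\ra{e_2}}$ depends only on the pair of oriented edges itself, not on the rest of the graph. Hence for any edge subset $\{e_{i_1},\ldots,e_{i_r}\} \subseteq E$, the matrix $T_{G-e_{i_1}-\cdots-e_{i_r}}$ is literally the principal submatrix of $T_G$ obtained by striking the $2r$ rows and columns indexed by $D_r := \{\ra{e_{i_j}}, \la{e_{i_j}} : 1 \leq j \leq r\}$.

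Next, I apply the standard expansion of a characteristic polynomial in principal minors: for an $N\times N$ matrix $A$ and $k\geq 0$,
$$[\lambda^{N-k}]\det(\lambda I - A) \;=\; (-1)^k \sum_{|S|=k}\det(A[S]),$$
where $A[S]$ denotes the principal submatrix indexed by $S$. Setting $n := 2|E|$ and $k := n-d$, and applying this to $T_G$ and to each $T_{G-e_{i_1}-\cdots-e_{i_r}}$ (of size $n-2r$, with $\lambda^{d-2r}$-coefficient extracted at exponent $k = (n-2r)-(d-2r)$), the factor $(-1)^k$ is the same on both sides and cancels. Identity (\ref{booh}) then reduces to
$$\sum_{\substack{S \subseteq \E \\ |S|=k}}\det(T_G[S]) \;=\; \sum_{r=1}^{\lfloor d/2\rfloor}(-1)^{r+1}\sum_{i_1<\cdots<i_r}\sum_{\substack{S \subseteq \E\setminus D_r \\ |S|=k}}\det(T_G[S]).$$

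The final step is to compute the coefficient of each $\det(T_G[S])$ on the right-hand side. Partition $E$ according to $S$ into the set $A(S)$ of edges with both orientations in $S$, the set $B(S)$ with exactly one orientation in $S$, and the set $C(S)$ with none; set $m := |C(S)|$. Then $S \subseteq \E \setminus D_r$ is equivalent to $\{e_{i_1},\ldots,e_{i_r}\} \subseteq C(S)$, so the number of valid $r$-tuples is $\binom{m}{r}$, and the coefficient of $\det(T_G[S])$ equals $\sum_{r=1}^{R}(-1)^{r+1}\binom{m}{r}$ with $R := \lfloor d/2 \rfloor$. From $k = 2|A(S)| + |B(S)|$ I obtain $m = |E| - k + |A(S)|$; the bound $|A(S)| \leq k/2$ gives $m \leq |E| - k/2 = d/2$, and by integrality $m \leq R$. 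Moreover $m = 0$ would force $|A(S)| = k-|E| = |E|-d \leq -1$ by the hypothesis $d \geq |E|+1$, which is impossible; hence $m \geq 1$. The standard identity $\sum_{r=0}^{m}(-1)^r\binom{m}{r}=0$ for $m \geq 1$ then yields $\sum_{r=1}^{R}(-1)^{r+1}\binom{m}{r}=1$, matching the left-hand side.

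The only delicate point is the identification in the first step; the rest is pure inclusion--exclusion bookkeeping. The hypothesis $d \geq |E|+1$ is exactly what is needed to guarantee $1 \leq m \leq R$ for every contributing $S$, i.e., to ensure that the truncated alternating sum reaches its telescoping value $1$. Reconstructibility from $\mathcal{D}^e(G)$ then follows from (\ref{booh}) by iterating Kelly's Lemma to express each $r \geq 2$ summand in terms of $r=1$ summands on edge-deleted graphs.
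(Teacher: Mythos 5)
Your proof is correct, but it takes a genuinely different route from the paper. The paper introduces the auxiliary multivariable polynomial $P_G(\lambda_1,\dots,\lambda_m)=\det(\underline{\lambda}-T_G)$ (with $\underline{\lambda}=\mathrm{diag}(\lambda_1,\lambda_1,\dots,\lambda_m,\lambda_m)$) and proves a standalone lemma about any polynomial that is at most quadratic in each variable, expressing the top-degree coefficients in terms of iterated second partial derivatives; the link to edge-deleted graphs comes from the identity $\partial^2 P_G/\partial\lambda_i^2 = 2P_{G-e_i}$. You instead observe directly that $T_{G-e_{i_1}-\cdots-e_{i_r}}$ is a principal submatrix of $T_G$, invoke the classical expansion $[\lambda^{N-k}]\det(\lambda I-A)=(-1)^k\sum_{|S|=k}\det(A[S])$, and do the inclusion--exclusion bookkeeping by counting, for each contributing index set $S$, the edges $e$ with neither $\ra{e}$ nor $\la{e}$ in $S$. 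Both arguments ultimately reduce to the same binomial identity $\sum_{r=1}^{c}(-1)^{r+1}\binom{c}{r}=1$ for $c\ge 1$ (in the paper, $c$ is the number of quadratic factors in a monomial; in yours, $c=|C(S)|$ --- and unwinding the determinant expansion one sees these are the same quantity), and your inequalities $1\le |C(S)| \le \lfloor d/2\rfloor$ play exactly the role of the paper's $1\le k\le d/2$. Your version is somewhat more elementary and self-contained in that it avoids introducing the auxiliary polynomial and the general Lemma~\ref{pol}, at the cost of more explicit set-theoretic bookkeeping; the paper's abstraction is cleaner to state and reuse. Two minor points: your use of $m$ for $|C(S)|$ clashes with the paper's $m=|E|$, so rename it; and the closing sentence about Kelly's Lemma is a shortcut --- what is really needed is that for $r\ge 1$ the sum $\sum_{i_1<\cdots<i_r}f(G-e_{i_1}-\cdots-e_{i_r})$ is determined by $\mathcal{D}^e(G)$, which follows by the elementary recursion $r\cdot\sum_{|I|=r}f(G-I)=\sum_{e}\sum_{|I'|=r-1}f((G-e)-I')$ rather than by Kelly's Lemma proper.
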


\begin{proof}
Let $m=|E|$, and order the rows and columns of the $2m \times 2m$ matrix $T_G$ so that for all $e \in E$, the two orientations $\ra{e}$ and $\la{e}$ label adjacent columns and rows. Set $$\underline{\lambda}=\mathrm{diag}(\lambda_1,\lambda_1,\lambda_2,\lambda_2,\dots,\lambda_m,\lambda_m)$$ and consider the multi-variable polynomial $$P_G(\lambda_1,\dots,\lambda_m):=\det(\underline{\lambda}-T_G).$$ By construction, $P_G$ has at most degree $2$ in each of the individual variables, and after specialisation of all variables to the same $\lambda$, we find $\det(\lambda-T_G)$. The theorem follows by applying the following lemma to $P=P_G$, observing that 
the formula for the expansion of a determinant by $(2m-2)\times(2m-2)$-minors implies
\begin{equation*} \frac{\partial^2P_G}{\partial\lambda_i^2}(\lambda_1,\dots,\lambda_m) = 2 P_{G-{e_i}}(\lambda_1,\dots,\widehat{\lambda_i},\dots,\lambda_m), \end{equation*}
 which we use iteratively to make the replacement \begin{equation} \label{rep} [\lambda^{d-2r}] \frac{\partial^{2r}{P_G}}{\partial\lambda_{i_1}^2\cdots \partial \lambda_{i_r}^2}(\lambda,\dots,\lambda)=2^r  [\lambda^{d-2r}]\det(\lambda-T_{G-e_{i_1}\dots-e_{i_r}}) \end{equation} in (\ref{co}). 
\end{proof}

\begin{lemma} \label{pol}
Let $P(\lambda_1,\dots,\lambda_m)$ denote a polynomial of total degree $2m$ in $m$ variables $\lambda_1,\dots,\lambda_m$. Assume that $P$ is at most quadratic in each individual variable $\lambda_i$.  If $d>m$, then
\begin{equation} \label{co} 
[\lambda^d]P(\lambda,\dots,\lambda) = \sum_{r =1}^{\lfloor \frac{d}{2}\rfloor} (-1)^{r+1} 2^{-r} \sum_{i_1<i_2<\dots<i_r} [\lambda^{d-2r}] \frac{\partial^{2r}{P}}{\partial\lambda_{i_1}^2\cdots \partial \lambda_{i_r}^2}(\lambda,\dots,\lambda). 
\end{equation}
\end{lemma}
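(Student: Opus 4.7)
The plan is to expand $P$ in the monomial basis and compare the contributions of each monomial to both sides of (\ref{co}). Since $P$ has degree at most two in each variable, write
$$P(\lambda_1,\dots,\lambda_m) = \sum_{\alpha \in \{0,1,2\}^m} c_\alpha \lambda_1^{\alpha_1}\cdots \lambda_m^{\alpha_m}.$$
Setting all $\lambda_i = \lambda$ and extracting the coefficient of $\lambda^d$ gives the left-hand side as $\sum_{|\alpha|=d} c_\alpha$, so the task reduces to showing that the right-hand side produces exactly the same sum.

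Next I would compute the second derivatives. The operator $\partial^2/\partial\lambda_i^2$ annihilates every monomial with $\alpha_i \in \{0,1\}$ and sends $\lambda_i^2$ to $2$, so
$$\frac{\partial^{2r} P}{\partial \lambda_{i_1}^2 \cdots \partial \lambda_{i_r}^2} \;=\; 2^r \!\!\sum_{\substack{\alpha:\,\alpha_{i_1}=\cdots=\alpha_{i_r}=2}}\!\! c_\alpha \prod_{j \notin\{i_1,\dots,i_r\}} \lambda_j^{\alpha_j}.$$
After diagonalizing and extracting $[\lambda^{d-2r}]$, the constraint $\sum_{j\notin\{i_s\}}\alpha_j = d-2r$ combined with $\alpha_{i_s}=2$ forces $|\alpha|=d$. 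The factor $2^{-r}$ in (\ref{co}) then cancels the $2^r$, so the right-hand side rewrites as
$$\sum_{|\alpha|=d} c_\alpha \cdot \left( \sum_{r=1}^{\lfloor d/2\rfloor}(-1)^{r+1} \#\{ (i_1<\cdots<i_r) : \alpha_{i_s}=2\ \forall s\}\right).$$
Writing $k(\alpha) := |\{i : \alpha_i = 2\}|$, the parenthesized factor equals $\sum_{r=1}^{k(\alpha)}(-1)^{r+1}\binom{k(\alpha)}{r}$, which by $(1-1)^{k(\alpha)}=0$ is $1$ if $k(\alpha)\geq 1$ and $0$ if $k(\alpha)=0$.

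The key step, and essentially the only place the hypothesis $d>m$ enters, is to verify that every $\alpha$ with $|\alpha|=d$ satisfies $k(\alpha)\geq 1$. This is a short pigeonhole argument: if only $k$ entries of $\alpha$ equal $2$, the remaining $m-k$ entries contribute at most $m-k$ to $|\alpha|$, so $d=|\alpha|\leq 2k+(m-k)=m+k$, forcing $k\geq d-m\geq 1$. Consequently each $c_\alpha$ with $|\alpha|=d$ receives coefficient $1$, matching the left-hand side.

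The main obstacle is really just the combinatorial bookkeeping; the substantive content is the observation that the hypothesis $d>m$ rules out the single case $k(\alpha)=0$ in which the alternating binomial sum would fail to evaluate to $1$. If $d\leq m$, the monomials $\lambda_{i_1}\lambda_{i_2}\cdots\lambda_{i_d}$ (with all exponents equal to $1$) contribute to $[\lambda^d]P(\lambda,\dots,\lambda)$ but are invisible to every second-derivative term on the right-hand side, explaining why (\ref{co}) can only recover the top half of the coefficients.
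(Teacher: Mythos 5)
Your proof is correct and takes essentially the same route as the paper: both reduce to a single monomial (you by expanding $P$ in the monomial basis and isolating a fixed $\alpha$, the paper by invoking linearity directly), both observe that $d>m$ forces at least one quadratic factor, and both conclude via the alternating binomial identity $\sum_{r\geq 1}(-1)^{r+1}\binom{k}{r}=1-(1-1)^k$. You merely make explicit the pigeonhole bound $d\leq m+k(\alpha)$ that the paper leaves implicit.
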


\begin{proof}
Since the statement is linear in $P$, it suffices to prove (\ref{co}) if $P$ is a monic monomial and $d=\deg P(\lambda,\dots,\lambda)$ (since for other $d$, the left and right hand side are both zero), when the left hand side is $1$. Suppose that such a monomial $P$ contains exactly $k$ quadratic factors $\lambda^2_i$. Since we assume $d>m$, we have $k \geq 1$, and since $P$ has degree $d$, we also have $k \leq d/2$. Then the right hand side equals $$\sum_{r =1}^{k} (-1)^{r+1} 2^{-r} \cdot 2^r \binom{k}{r} = 1-(1-1)^k = 1.$$
\end{proof}

\section{A formula of Bass and reconstruction of $\zeta_G$}\label{Sbass}

If the graph $G$ under consideration is $(q+1)$-\emph{regular} for some $q \in \Z_{\geq 2}$ (when the reconstruction problem is easy), the Ihara zeta function satisfies functional equations, for example (\cite{Bass}, II.3.10)
$$ \zeta_G(\frac{1}{qu}) =\left(\frac{1-u^2}{1-q^2u^2}\right)^{n\frac{q-1}{2}}q^{qn} u^{(q+1)n} \zeta_G(u). $$
This implies ``palindromic'' relations between the top $m$ and bottom $m$ coefficients of $\zeta^{-1}_G(u)$, so that reconstruction of half the coefficients would be enough for full reconstruction. In the general irregular case that we consider here, there is no such functional equation, but as a substitute for finding relations between the coefficients, at the cost of assuming a certain minimal average degree, we will use an identity of Bass (\cite{Bass}, II.1.5), stating that \begin{equation} \label{bass} \det(1-Tu) = (1-u^2)^{|E|-|V|} \det(1-Au+(D-1)u^2),\end{equation} where $A$ is the adjacency matrix of $G$ and $D = \mathrm{diag} (\deg(v_1),\dots,\deg(v_{|V|}))$ is the degree matrix of $G$. (Recall our convention to denote a unit square matrix of suitable size simply by ``$1$''.) 

\begin{lemma} \label{po} The coefficients $[\lambda^d]B_G$ of $B(\lambda)=\det(\lambda^2-A\lambda+(D-1))$ are edge-reconstructible for $d=2|V|-|E|+1,\dots,2|V|.$
\end{lemma}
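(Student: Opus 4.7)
The plan is to recast Bass's identity (\ref{bass}) as a factorization of $\det(\lambda - T_G)$, and then extract the top coefficients of $B_G(\lambda)$ by polynomial long division from the leading term, feeding in the top coefficients of $\det(\lambda-T_G)$ supplied by Theorem \ref{tfirst}.

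The first step is the substitution $u = 1/\lambda$ in (\ref{bass}), followed by multiplication of both sides by $\lambda^{2|E|}$. A short bookkeeping computation turns the left-hand side into $\det(\lambda - T_G)$ and the right-hand side into $(\lambda^2-1)^{|E|-|V|} B_G(\lambda)$, so
$$\det(\lambda - T_G) = (\lambda^2-1)^{|E|-|V|}\, B_G(\lambda).$$
Both $|V|$ and $|E|$ are themselves edge-reconstructible, since every member of $\mathcal{D}^e(G)$ has $|V|$ vertices and $|E|-1$ edges; hence the polynomial $C(\lambda) := (\lambda^2-1)^{|E|-|V|}$ is fully known from the edge deck. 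Moreover $C$ is monic of degree $2(|E|-|V|)$, and $B_G$ is monic of degree $2|V|$, with degrees matching $2|E|$ on both sides.

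Writing out the product $\det(\lambda - T_G) = C(\lambda) B_G(\lambda)$ coefficient by coefficient starting from the top, and using $[\lambda^{2(|E|-|V|)}]C = 1$, one obtains for each $k\ge 0$ a recursion of the form
$$[\lambda^{2|V|-k}] B_G = [\lambda^{2|E|-k}]\det(\lambda-T_G) - \sum_{j=1}^{k} [\lambda^{2(|E|-|V|)-j}]C \cdot [\lambda^{2|V|-k+j}] B_G,$$
expressing the top coefficients of $B_G$ in terms of already-known lower-index coefficients of $B_G$ together with the top coefficients of $\det(\lambda-T_G)$. By Theorem \ref{tfirst}, the latter are edge-reconstructible precisely for $k = 0, 1, \dots, |E|-1$, so the recursion delivers $[\lambda^d] B_G$ for exactly $d = 2|V|, 2|V|-1, \dots, 2|V|-|E|+1$, as required. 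The degenerate case $|E| = |V|$ is trivial since then $C=1$ and $B_G = \det(\lambda - T_G)$ directly. There is no real obstacle beyond unpacking Bass's identity: the whole argument is a formal inversion once Theorem \ref{tfirst} is in hand.
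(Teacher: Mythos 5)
Your proof is correct and essentially identical to the paper's argument: both rewrite Bass's identity as the polynomial factorization $\det(\lambda-T_G)=(\lambda^2-1)^{|E|-|V|}B_G(\lambda)$, note that $|V|$, $|E|$ (hence the cofactor $(\lambda^2-1)^{|E|-|V|}$) are edge-reconstructible, and then solve recursively for the top $|E|$ coefficients of $B_G$ by peeling off the leading term, using Theorem \ref{tfirst} to supply the coefficients $[\lambda^{2|E|-k}]\det(\lambda-T_G)$ for $k=0,\dots,|E|-1$. The only difference is cosmetic — you display the recursion as an explicit formula, while the paper describes it in words.
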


\begin{proof} Set $P(\lambda)=\det(\lambda-T)$, and $A(\lambda)=(\lambda^2-1)^{|E|-|V|}$. The identity of Bass becomes $P(\lambda)=A(\lambda) B(\lambda).$ 
All coefficients $[\lambda^i]A$ are easily computable and depend only on $|E|$ and $|V|$; also note that for even $i$, they are non-zero. Now $|V|$ and $|E|$ are edge-reconstructible as $|V|=|V-e|$ and $|E|=|E-e|+1$ for any $e \in E$. The previous theorem implies that the coefficients $[\lambda^k]P$ are edge reconstructible  for $k=|E|+1,\dots,2|E|$. We will use this to reconstruct the coefficients $[\lambda^d]B$ for $d = 2|V|-|E|+1,\dots,2|V|$. We use the formula \[ [\lambda^k]P=\sum\limits_{i=0}^{2|V|} [\lambda^i]B \cdot [\lambda^{k-i}] A.\]
recursively. For $k=2|E|$ we find the relation $$[\lambda^{2|E|}]P=[\lambda^{2|V|}]B \cdot [\lambda^{2(|E|-|V|)}]A,$$ from which we find $[\lambda^{2|V|}]B$. We continue with $[\lambda^{2|E|-1}]P, [\lambda^{2|E|-2}]P,\dots$ and note that in each step corresponding to $[\lambda^{2|E|-j}]P$ we find recursively that the only unknown term in the above sum is $$[\lambda^{2|V|-j}]B \cdot [\lambda^{2(|E|-|V|)}]A.$$ Since $ [\lambda^{2(|E|-|V|)}]A \neq 0$, this allows us to recover $[\lambda^{2|V|-j}]B$. The procedure terminates at $[\lambda^{2|V|-|E|+1}]B$, since $[\lambda^{2|E|-(|E|-1)}]P$ is the highest coefficient which is not reconstructed by the previous theorem. 
\end{proof}

\begin{theorem}[Theorem \ref{main}\textup{(i)}] Let $G$ denote a graph of average degree $\bar d \geq 4$; then the Ihara zeta function $\zeta_G$ of $G$, or, equivalently, the spectrum of the edge-adjacency matrix $T$, is edge-reconstructible.
\end{theorem}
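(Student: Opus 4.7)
The plan is to combine Bass's identity (\ref{bass}) with Lemma \ref{po}. Setting $P(\lambda) := \det(\lambda I - T)$, one has $\zeta_G^{-1}(u) = u^{2|E|} P(u^{-1})$, so reconstructing $\zeta_G$ (and the spectrum of $T$) is equivalent to reconstructing $P$. Bass's identity factors $P$ as $P(\lambda) = A(\lambda) B(\lambda)$, with $A(\lambda) := (\lambda^2-1)^{|E|-|V|}$---a polynomial depending only on the edge-reconstructible integers $|E|$ and $|V|$---and $B(\lambda) := \det(\lambda^2 I - A\lambda + (D-I))$ of degree $2|V|$. The task therefore reduces to reconstructing $B$.

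By Lemma \ref{po}, the top $|E|$ coefficients $[\lambda^d]B$ (for $d = 2|V|-|E|+1,\ldots,2|V|$) are already edge-reconstructible. The hypothesis $\bar d \geq 4$ is exactly $|E| \geq 2|V|$, so this range covers every $d \in \{1,\ldots,2|V|\}$. If moreover $\bar d > 4$, then $|E| \geq 2|V|+1$, the range extends to $d = 0$ as well, and $B$, hence $P$, hence $\zeta_G$, is reconstructed outright.

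Only the borderline case $\bar d = 4$ (equivalently $|E|=2|V|$) requires extra work: exactly one coefficient of $B$ is missing, namely the constant term $[\lambda^0]B = \det(D-I) = \prod_{v\in V}(\deg v - 1)$. I would close the gap by reconstructing the degree multiset $\{\deg v\}_{v\in V}$ from the edge deck. By the Lov\'asz--Nash-Williams edge-counting lemma, the number of subgraphs of $G$ isomorphic to any fixed $H$ with $|E(H)|<|E(G)|$ is edge-reconstructible; applied to the stars $H = K_{1,k}$ for $k = 1,\ldots,|E|-1$, this yields $\sum_{v}\binom{\deg v}{k}$, and since $|E|-1 \geq |V|$ in this regime, Newton's identities recover the degree multiset and hence the symmetric function $\prod_v(\deg v - 1)$, finishing the reconstruction of $B(0)$.

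The main obstacle is exactly this boundary case $\bar d = 4$: polynomial division by the known factor $A(\lambda)$ leaves one coefficient of $B$ undetermined, and no further relation intrinsic to Bass's identity supplies it---the vanishing conditions $P^{(k)}(\pm 1) = 0$ together with the $|E|$ known top coefficients fall exactly one equation short. A genuinely extrinsic combinatorial invariant is therefore needed, and the degree sequence is the cleanest choice; some care is required to ensure the counting lemma applies in the multi-graph setting (possibly with loops), but the standard proofs transfer.
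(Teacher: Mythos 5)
Your proposal follows essentially the same route as the paper: reduce via Bass's identity to reconstructing $B(\lambda) = \det(\lambda^2 - A\lambda + (D-1))$, invoke Lemma~\ref{po} for the coefficients $[\lambda^d]B$ with $d = 2|V|-|E|+1,\dots,2|V|$, and supply the remaining constant term $[\lambda^0]B = \prod_{v}(\deg v - 1)$ from the edge-reconstructibility of the degree sequence. The only deviation is cosmetic: the paper simply cites Bondy~6.14(a) and McDonald for degree-sequence reconstruction (which already covers the multigraph case), whereas you sketch a from-scratch proof via star-counting and Newton's identities---a valid but optional elaboration of that cited fact.
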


\begin{proof} We first observe that $$[\lambda^0]B = \det(D-1) = \prod_{v \in V} (\deg(v)-1)$$ is reconstructible, since the degree sequence is reconstructible (\cite{Bondy}, Corollary  6.14.(a)), \ref{appcor}(\ref{c3}). Therefore, from the previous lemma, we can reconstruct \emph{all} $[\lambda^d]B$ (and hence $B$, and hence $P$) if $$2|V|-|E|+1\leq 1.$$ This holds exactly if $\bar d\geq 4$, since
$\bar d=2|E|/|V|.$
\end{proof}

\begin{notation} \label{pfnot} If the graph $G$ is connected with no degree one vertices and first Betti number $b_1 \geq 2$ (which follows from our running hypothesis $\bar d \geq 4$), the matrix $T$ is irreducible (\cite{Terras}, 11.10), hence it has a simple Perron-Frobenius eigenvalue $\lambda_{\PF}$ equal to the spectral radius of $T$; it is the maximal real positive eigenvalue of $T$ (e.g.,  \cite{Meyer}, 8.3). 
\end{notation}

\begin{corollary} \label{recpf}
Let $G$ denote a graph of average degree $\bar d \geq 4$; then the Perron-Frobenius eigenvalue $\lambda_{\PF}$ of $T=T_G$ is edge-reconstructible. \qed
\end{corollary}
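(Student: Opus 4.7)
The plan is to deduce this corollary essentially immediately from the theorem that precedes it, combined with the notation block that records the Perron–Frobenius properties of $T$. First I would observe that by Theorem \ref{main}(i) (just established), the characteristic polynomial $\det(\lambda - T_G)$ is edge-reconstructible whenever $\bar d \geq 4$; equivalently, via the Bass identity \eqref{sel2}, reconstructing $\zeta_G$ is the same as reconstructing this polynomial. Thus the full spectrum of $T_G$ (with multiplicities) is edge-reconstructible as a multiset of complex numbers.

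Next I would invoke the hypothesis: since $\bar d \geq 4$ forces $b_1 \geq 2$ (together with the running assumption that $G$ is connected and has no end-vertices), the notation block asserts that $T$ is irreducible, so classical Perron–Frobenius theory applies to the non-negative matrix $T$: there is a unique eigenvalue $\lambda_{\PF}$ which is real, positive, simple, and equals the spectral radius $\max_{\lambda \in \mathrm{Spec}(T)} |\lambda|$.

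Finally, since the spectrum is edge-reconstructible and $\lambda_{\PF}$ is a function of the spectrum alone — concretely, the maximum of the moduli of the eigenvalues, or equivalently the largest real positive eigenvalue — it too is edge-reconstructible.

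There is no real obstacle here: the content of the corollary is essentially contained in part (i) of the main theorem, and the only thing to verify is that extracting $\lambda_{\PF}$ is a spectral operation, which is exactly what the Perron–Frobenius theorem guarantees under the irreducibility that $\bar d \geq 4$ supplies.
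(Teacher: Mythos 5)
Your proposal is correct and matches the paper's intended argument exactly: the corollary is stated with a \textup{\qedsymbol} precisely because, once Theorem~\ref{main}(i) gives the spectrum and the preceding notation block records that $\bar d \geq 4$ forces $b_1 \geq 2$, hence $T$ irreducible, the Perron--Frobenius eigenvalue is simply the spectral radius and thus a function of the reconstructed spectrum. No further work is needed.
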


In Section \ref{walks}, we will give another proof of Theorem \ref{main}(i) that avoids Lemma \ref{pol}, but has the disadvantage of not leading directly to the formula from Theorem \ref{tfirst} for the coefficients in terms of coefficients corresponding to edge-deleted subgraphs. 

In analogy to the question whether the characteristic polynomial of $G$ is determined uniquely by those of its vertex deleted subgraphs \cite{Gutman}, one may ask

\begin{question} Can $\zeta_G$ be reconstructed from $\{ \zeta_{G-e} \colon e \in E \}$?
\end{question} 

\begin{theorem} If $G$ has average degree $\bar d > 4$, then $\zeta_G$ is uniquely determined by the multiset $\mathcal{Z}(G):=\{ \zeta_{G-\mathbf{e}} \colon \emptyset \neq \mathbf e \subset E \},$ where $\mathbf{e}$ runs over all non-empty subsets of $E$. 
\end{theorem}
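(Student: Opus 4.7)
The plan is to revisit the proof of Theorem \ref{main}(i) and verify that each step can be carried out from the multiset $\mathcal{Z}(G)$ alone, without knowing the isomorphism classes in the edge deck. Writing $P_H(\lambda) := \det(\lambda - T_H) = \lambda^{2|E(H)|}\zeta_H^{-1}(1/\lambda)$, the identity \eqref{booh} of Theorem \ref{tfirst} reads, for $d = |E|+1,\dots,2|E|$,
$$[\lambda^d]P_G = \sum_{r=1}^{\lfloor d/2 \rfloor}(-1)^{r+1}\sum_{|\mathbf{e}|=r}[\lambda^{d-2r}]P_{G-\mathbf{e}}.$$
The crucial observation is that the inner sum is insensitive to the graph-isomorphism classes of the edge-deleted subgraphs: it depends only on the sub-multiset $\mathcal{Z}_r(G) := \{\zeta_{G-\mathbf{e}} : |\mathbf{e}| = r\}$. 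Since $|\mathbf{e}|$ is readable from the degree of $\zeta^{-1}_{G-\mathbf{e}}$, the multiset $\mathcal{Z}(G)$ partitions canonically into the $\mathcal{Z}_r(G)$, and the top $|E|$ coefficients of $P_G$ are thereby determined.

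With these coefficients in hand, I invoke the Bass factorisation $P_G = (\lambda^2-1)^{|E|-|V|}B_G$ and run the recursion of Lemma \ref{po} to recover $[\lambda^d]B_G$ for $d$ decreasing from $2|V|$ down to $2|V|-|E|+1$. The strict hypothesis $\bar d > 4$, equivalently $|E| \geq 2|V|+1$, forces $2|V|-|E|+1 \leq 0$, so the recursion reaches every coefficient of $B_G$ including the constant term; in particular no appeal to the degree sequence of $G$ is needed (the reason the degree hypothesis must be strict). Hence $B_G$, then $P_G$, and finally $\zeta_G$ are determined by $\mathcal{Z}(G)$.

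The main obstacle is that this argument presupposes the values of $|E|$ and $|V|$. The quantity $|E|$ is immediate, as $|E| = 1 + \tfrac{1}{2}\max\{\deg \zeta^{-1} : \zeta \in \mathcal{Z}(G)\}$. For $|V|$, I appeal once more to Bass: writing $\zeta_H^{-1}(u) = (1-u^2)^{|E(H)|-|V|}\det(I - A_H u + (D_H-1)u^2)$, the block decomposition of the second factor by the connected components of $H$ shows that $\det(I - A_H u + (D_H-1)u^2)$ vanishes at $u=1$ to order exactly $c(H)$, the number of connected components of $H$ (each component contributes a simple zero, the kernel at $u=1$ being spanned by the indicator of that component). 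Applied to $H = G-e$ this yields
$$\mathrm{ord}_{u=1}\zeta^{-1}_{G-e}(u) = |E|-1-|V|+c(G-e),$$
and the minimum over $e \in E$ equals $|E|-|V|$ precisely when $e$ is a non-bridge. Non-bridges exist because the bridges of $G$ form a spanning subforest and therefore number at most $|V|-1 < |E|$, using $\bar d > 4$. Hence $|V| = |E| - \min_{e \in E}\mathrm{ord}_{u=1}\zeta^{-1}_{G-e}$ can be read off from $\mathcal{Z}_1(G) \subset \mathcal{Z}(G)$, closing the argument.
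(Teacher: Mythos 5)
Your proposal follows essentially the same route as the paper's own proof: rewrite the formula of Theorem \ref{tfirst} so that the inner sum depends only on the subfamily $\mathcal{Z}_r(G)$ of zeta functions with $|\mathbf{e}|=r$, then run the recursion of Lemma \ref{po}, observing that $\bar d>4$ forces the recursion past the constant term so that the degree sequence is not needed. Your additional attempt to explain how $|E|$ and $|V|$ are read off from $\mathcal{Z}(G)$ addresses a point the paper leaves implicit, and your observation about why the hypothesis $\bar d>4$ must be strict (in contrast to Theorem \ref{main}(i), which uses $\det(D-1)$) is exactly the remark the paper makes after this theorem.

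However, the argument you give for recovering $|V|$ contains a genuine error. You claim that $\det(I-A_Hu+(D_H-1)u^2)$ vanishes at $u=1$ to order \emph{exactly} $c(H)$, the number of connected components, ``because the kernel at $u=1$ is spanned by the indicator vectors of the components.'' The dimension of $\ker(D_H-A_H)$ gives only a \emph{lower} bound for the vanishing order of the determinant as a function of $u$; it does not determine it. A counterexample is $H=C_3$: here $c(H)=1$ and $|E_H|-|V_H|=0$, yet $\det(I-A_Hu+(D_H-1)u^2)=(1-u)^2(1+u+u^2)^2$ vanishes to order $2$ at $u=1$. The correct input (which the paper invokes in Proposition \ref{h}, citing Bass II.5.10(b)(i)) is that for a \emph{connected} graph $H$ with first Betti number $b_1(H)>1$, the eigenvalue $1$ of $T_H$ has algebraic multiplicity exactly $b_1(H)$, so $\mathrm{ord}_{u=1}\zeta_H^{-1}=b_1(H)$; components with $b_1=1$ contribute order $2$ rather than $1$. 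Your final formula $|V|=|E|-\min_{e\in E}\mathrm{ord}_{u=1}\zeta_{G-e}^{-1}$ nevertheless turns out to be correct under $\bar d>4$: for a non-bridge $e$, $G-e$ is connected with $b_1(G-e)=|E|-|V|>|V|>1$, giving order exactly $|E|-|V|$, while for a bridge the two components have total Betti number $|E|-|V|+1$ and the order is at least that. So the step can be repaired by replacing the Laplacian-kernel heuristic with the correct citation to Bass, but as written the justification is incorrect.
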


\begin{proof}
The number $|\mathbf e|$ of distinct edges in $\mathbf{e}$, is determined by the degree of $\zeta^{-1}_{G-\mathbf{e}}$. The formula in Theorem \ref{tfirst} can be rewritten as
$$ [\lambda^d] \zeta_G^{-1} =  \sum_{r =1}^{\lfloor \frac{d}{2}\rfloor} (-1)^{r+1} \sum_{|\mathbf e|=r} [\lambda^{d-2r}] \zeta_{G-\mathbf{e}}^{-1}$$ for $d>|E|$, which is reconstructible from $\mathcal Z (G)$.  As in Lemma \ref{po}, we can then also reconstruct all coefficients of $\zeta_G^{-1}$, as soon as $2|V|-|E|+1<1$, i.e., $\bar d > 4$. \end{proof} 

If $\mathcal Z (G)$ uniquely determines $\det(D-1)$, then one may replace the bound $\bar d > 4$ in this theorem by $\bar d \geq 4$. 

\begin{remark} We list some invariants and properties that have been shown to be determined by $\zeta_G$: 
\begin{enumerate}
\item the \emph{girth} $g$ (length of shortest cycle) of $G$ (since $[\lambda^i]\det(\lambda-T)=0$ for $i=2m-1,\dots,2m-g+1$, and for $i=2m-g,\dots,2m-2g+1$, it  is negative twice the number of $(2m-i)$-gons in $G$, cf.\ Scott and Storm \cite{StormInvolve});  
\item whether $G$ is  \emph{bipartite and cyclic, bipartite non-cyclic or non-bipartite} (Cooper \cite{Cooper}, Theorem 1); 
\item whether or not $G$ is \emph{regular}; and if so, its regularity and the spectrum of its (vertex) adjacency operator (Cooper \cite{Cooper}, Theorem 2).
\end{enumerate}
It follows from our theorem that  for $\bar d \geq 4$, these invariants and properties are edge-reconstructible; but notice that the edge-reconstructibility of these invariants was already known in general from Kelly's Lemma below. \end{remark}

\begin{kellylemma} \label{Kellys}
For any graph $H$ with strictly less vertices than the graph $G$,  the number of induced subgraphs of $G$ isomorphic to $H$ is edge-reconstructible. 
\end{kellylemma}

For Kelly's original 1957 lemma  for vertex reconstruction, see (\cite{Bondy}, Lemma 2.3) (cf.\ \cite{Kelly}, Lemma 1); for the edge version, see  (\cite{Bondy}, Lemma 6.6); and for the multigraph version see \ref{appcor}(\ref{c1}).  We will make repeated use of this result later on.

\section{Symmetry of the edge adjacency operator}

The matrix $T$ is not symmetric in general: $T$ being a symmetric matrix means that $T_{\ra{e_2},\ra{e_1}}=1$ whenever $T_{\ra{e_1},\ra{e_2}}=1$), so for a graph $G$ in our sense, this only happens if $G$ is a ``banana graph'' consisting of two vertices connected by several edges. However, $T$ does have a certain symmetry. 
 
\begin{definition}\label{haakje}
Let $M^\intercal$ denote the transpose of a matrix $M$. Define an indefinite symmetric bilinear form $\langle \cdot,\! \cdot \rangle$ on $\R^{2|E|}$ by $$\langle x,y \rangle := x^{\intercal} J y,$$ where $J$ is a block matrix 
$$ J = \left( \begin{array}{cc} 0 & 1_{|E|} \\ 1_{|E|} & 0 \end{array} \right). $$ The signature of this form is $(|E|,|E|)$, and $(\R^{2|E|}, \langle \cdot ,\! \cdot \rangle)$ is a finite dimensional Kre\u{\i}n space (i.e., an indefinite metric space, compare \cite{Bognar}). 
\end{definition}

An eigenvalue is called \emph{semi-simple} if its algebraic multiplicity (its multiplicity as a root of the characteristic polynomial) and its geometric multiplicity (the dimension of its eigenspace) are equal. 

\begin{proposition} \label{krein} 
The operator $T \colon \R^{2|E|} \rightarrow \R^{2|E|}$ is symmetric for an (indefinite) metric $\langle \cdot ,\! \cdot \rangle$ of signature $(|E|,|E|)$. Its generalized eigenspaces are mutually orthogonal for this metric, and $T$ has at most $|E|$ non-semi-simple eigenvalues. 
\end{proposition}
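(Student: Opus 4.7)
To prove the $\langle\cdot,\cdot\rangle$-symmetry of $T$, I would observe that $J$ is the permutation matrix of the orientation-reversing involution $\sigma\colon \ra{e}\mapsto \la{e}$ on $\E$; hence the condition $\langle Tx,y\rangle=\langle x,Ty\rangle$ for all $x,y$ is equivalent to $T^\intercal = JTJ$. At the $(\ra{e_1},\ra{e_2})$-entry this reads $T_{\ra{e_2},\ra{e_1}} = T_{\la{e_1},\la{e_2}}$, and both sides unpack---directly from the definition of $T$---to the single condition that $t(\ra{e_2})=o(\ra{e_1})$ and $\ra{e_1}\neq\la{e_2}$. The signature is immediate: $J$ has eigenvectors $e_{\ra{e}}\pm e_{\la{e}}$ with eigenvalues $\pm 1$, each spanning an $|E|$-dimensional subspace, so the signature of $\langle\cdot,\cdot\rangle$ is $(|E|,|E|)$.

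For the orthogonality of generalized eigenspaces I would extend $\langle\cdot,\cdot\rangle$ to a non-degenerate complex symmetric bilinear form on $\C^{2|E|}$ (so that complex eigenvalues are handled uniformly) and run the standard Kre\u{\i}n-space calculation. Given $v\in W_\lambda$, $w\in W_\mu$ with $\lambda\neq\mu$ and $(T-\mu)^l w=0$, iterated symmetry of $T$ gives $\langle (T-\mu)^l v,w\rangle = \langle v,(T-\mu)^l w\rangle = 0$. Since $T-\mu$ acts on $W_\lambda$ as $(\lambda-\mu)I$ plus a nilpotent, $(T-\mu)^l$ is invertible on $W_\lambda$, and therefore $\langle v',w\rangle = 0$ for every $v'\in W_\lambda$.

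The step where I expect the essential idea to sit is the bound on the number of non-semi-simple eigenvalues. The plan is to exhibit, for each such eigenvalue, an isotropic eigenvector, and then to assemble these into a totally isotropic subspace. If $\lambda$ is non-semi-simple, then $(T-\lambda)|_{W_\lambda}$ is nilpotent but non-zero, so one may choose $u\in W_\lambda$ with $v:=(T-\lambda)u\neq 0$ and $(T-\lambda)v=0$; such a $v$ is a genuine $\lambda$-eigenvector, and the identity
$$\langle v,v\rangle = \langle (T-\lambda)u,v\rangle = \langle u,(T-\lambda)v\rangle = 0$$
shows that $v$ is isotropic. As $\lambda$ ranges over the non-semi-simple complex spectrum, the vectors $v_\lambda$ lie in generalized eigenspaces for distinct eigenvalues and are, by the previous step, pairwise orthogonal; hence they span a totally isotropic subspace of $\C^{2|E|}$ whose dimension equals the number of non-semi-simple eigenvalues. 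Since any non-degenerate symmetric bilinear form on $\C^{2|E|}$ has maximal totally isotropic subspace of dimension $|E|$, this yields the bound.
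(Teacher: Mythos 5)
Your argument is correct, and while step (1) follows the paper's route essentially verbatim (the key observation $T_{\ra{e_1},\ra{e_2}} = T_{\la{e_2},\la{e_1}}$, equivalently $T^\intercal = JTJ$, plus the obvious eigenspace decomposition of $J$), your steps (2) and (3) replace the paper's citations to Bogn\'ar's monograph with self-contained elementary arguments. The paper cites Bogn\'ar II.3.3 for orthogonality of generalized eigenspaces and IX.4.8 (the $\Pi_{|E|}$-Pontryagin space bound) for the count of non-semi-simple eigenvalues; you instead complexify the bilinear form, prove orthogonality directly via the invertibility of $(T-\mu)^l$ on $W_\lambda$ for $\lambda\neq\mu$, and then obtain the bound by building a totally isotropic subspace out of one isotropic eigenvector per non-semi-simple eigenvalue (the $v=(T-\lambda)u$ with $(T-\lambda)v=0$ trick) and invoking the Witt-index bound $\dim W \le \tfrac12 \dim W + \tfrac12\dim W^\perp = |E|$ for a non-degenerate symmetric bilinear form on $\C^{2|E|}$. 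What the paper's route buys is brevity and the framing within the established theory of Kre\u{\i}n/Pontryagin spaces (which the authors use to motivate terminology); what your route buys is a fully self-contained proof that exposes exactly where the bound $|E|$ comes from, namely the Witt index of the complexified form. Both are correct; yours is arguably more illuminating for a reader unfamiliar with the indefinite-inner-product-space literature.
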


\begin{proof} 
Observe that $$T_{\ra{e_1},\ra{e_2}} = T_{\la{e_2},\la{e_1}}$$ for all $e_1, e_2 \in E$. By enumerating the rows and columns of $T$ as $\ra{e_1},\dots,\ra{e_{|E|}},\la{e_1},\dots,\la{e_{|E|}}$, we see that $T$ is of the form 
$$ T = \left( \begin{array}{cc} A & B \\ C & A^\intercal \end{array} \right) \mbox{ with } B=B^\intercal \mbox{ and } C^\intercal=C. $$
Being of this form is equivalent to the fact that $T$ satisfies an equation
\begin{equation} \label{Tss} T^\intercal = JTJ. \end{equation}
Equation (\ref{Tss}) means exactly that $T$ is symmetric for the form $\langle \cdot ,\! \cdot \rangle$, namely: $\langle Tx, y \rangle = \langle x, Ty \rangle$ for all $x,y \in \R^{2|E|}$.

 Since $T$ is $J$-symmetric, the different generalized eigenspaces are mutually $J$-orthogonal  (\cite{Bognar}, II.3.3). Finally, since $\langle \cdot ,\! \cdot \rangle$ has signature $(|E|,|E|)$, the space $(\R^{2|E|},\langle \cdot ,\! \cdot \rangle)$ is a Pontrjagin $\Pi_{|E|}$-space in the sense of (\cite{Bognar}, Chapter IX). Since $T$ is $J$-symmetric, it follows that the number of distinct non-semi-simple eigenvalues  is less than or equal to $|E|$ (\cite{Bognar}, IX.4.8).
\end{proof} 

A succinct way of expressing the bilinear form is $$\langle v , w \rangle = \sum_{\ra{e} \in \E} v_{\ra{e}} w_{\la{e}}. $$ Not every $\langle \cdot ,\! \cdot \rangle$-symmetric matrix in a Kre\u{\i}n space is diagonalisable (e.g., the matrix $\left( \begin{smallmatrix} 1_{|E|} & 1_{|E|} \\ 0 &1_{|E|} \end{smallmatrix} \right)$ is $J$-symmetric but not semi-simple). It is easy to construct examples of graphs for which $T$ is not semi-simple, if we temporarily drop our assumption that the graph has no end-vertices:  

\begin{proposition}  \label{sinkjordan}
If $G$ is a connected graph with an end-vertex and $|E|>1$, then $T$ is not semi-simple; actually,  zero is an eigenvalue of $T$ with a non-trivial Jordan block. 
\end{proposition}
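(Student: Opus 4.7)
Let $v$ be an end-vertex with unique incident edge $e=\{v,w\}$, oriented so that $o(\ra{e})=v$ and $t(\ra{e})=w$. The plan is to show that the basis vector $e_{\ra{e}}\in\R^{2|E|}$ is a nonzero element of $\ker T\cap\operatorname{Im}T$; any such vector immediately provides a non-trivial Jordan block of $T$ at the eigenvalue $0$.

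The kernel inclusion $Te_{\ra{e}}=0$ is essentially free from the definition of $T$. A nonzero entry $T_{\ra{a},\ra{e}}$ would require both $t(\ra{a})=v$ and $\ra{e}\neq\la{\ra{a}}$; but since $v$ has degree one, the only oriented edge ending at $v$ is $\la{e}$, whose reverse is $\ra{e}$ itself. Thus no such $\ra{a}$ exists, and the entire $\ra{e}$-column of $T$ vanishes.

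The main step, and the only one requiring any real work, is the image inclusion $e_{\ra{e}}\in\operatorname{Im}T$. The assumption that $G$ is connected with $|E|>1$ forces $w$ to have degree $k+1$ with $k\geq1$; let $h_1,\dots,h_k$ denote the edges at $w$ other than $e$, each oriented away from $w$ as $\ra{h_i}$. Unwinding the definition of $T$ yields the identities
\[
Te_{\la{e}}=\sum_{i=1}^{k}e_{\la{h_i}},\qquad Te_{\ra{h_i}}=e_{\ra{e}}+\sum_{j\neq i}e_{\la{h_j}}\quad (i=1,\dots,k),
\]
and the key algebraic observation is that summing the second identity over $i$ and subtracting $(k-1)$ copies of the first isolates exactly $k\,e_{\ra{e}}$ on the right-hand side. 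Consequently
\[
u:=\frac{1}{k}\sum_{i=1}^{k}e_{\ra{h_i}}-\frac{k-1}{k}\,e_{\la{e}}
\]
is a nonzero preimage of $e_{\ra{e}}$ under $T$. Since $Tu=e_{\ra{e}}\neq0$ while $T^{2}u=0$, the vector $u$ lies in $\ker T^{2}\setminus\ker T$, exhibiting the required non-trivial Jordan block of $T$ at~$0$.
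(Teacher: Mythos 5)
Your proof is correct, and its skeleton is the same as the paper's: exhibit a nontrivial Jordan block at $0$ by producing a vector in $\ker T^2\setminus\ker T$. The kernel vector you use, $e_{\ra e}$ with $\ra e$ oriented out of the end-vertex $v$, is (up to the natural $J$-conjugation $\ra e\leftrightarrow\la e$ of Proposition~\ref{krein}) the same one the paper points to.

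Where you diverge is in the preimage step. The paper simply picks \emph{any} oriented edge $\ra{e_1}$ terminating at the penultimate vertex $w$ and asserts $\ra{e_1}\in\ker T^2\setminus\ker T$. That assertion is only automatic when $\deg w=2$: if $w$ has degree $\geq 3$, then $T e_{\ra{e_1}}$ (or $T^\intercal e_{\ra{e_1}}$, depending on the row/column convention) picks up the extra ``bush'' terms $e_{\la{h_j}}$ besides $e_{\ra e}$, and these need not die under one more application of $T$. Your construction of the explicit combination
$$u=\tfrac{1}{k}\sum_{i=1}^k e_{\ra{h_i}}-\tfrac{k-1}{k}\,e_{\la e}$$
cancels exactly those bush terms, so $Tu=e_{\ra e}$ cleanly, and the conclusion $u\in\ker T^2\setminus\ker T$ holds for every $\deg w\geq 2$. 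So your argument is a slightly heavier but genuinely more robust version of the paper's one-line claim; it fills in the case $\deg w\geq 3$ which the paper's phrasing glosses over.

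One small remark: when $k=1$ your $u$ reduces to $e_{\ra{h_1}}$, recovering the paper's choice verbatim, which is a nice sanity check that the two proofs agree on the base case.
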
 

\begin{proof}
If $\ra{e}$ is an oriented edge that ends in an end-vertex (so $T_{\ra{e},\ast}=0$ for all $\ast \in \E$), then $\ra{e} \in \ker T$, and if $\ra{e_1}$ is an oriented edge with $t(\ra{e_1}) = o(\ra{e})$ (which exists by connectedness and since $|E|>1$), then $\ra{e_1} \in \ker T^2 - \ker T$. 
\end{proof}

\begin{question} Give necessary and/or sufficient criteria for a (multi-)graph $G$ to have a semi-simple edge-adjacency operator $T$. More specifically, 
is the presence of end-vertices the only obstruction to semi-simplicity?
\end{question} 

\section{The $\pm 1$-eigenspaces of the edge adjacency operator}

In the next two propositions, we show that $T$ has a ``large'' semi-simple quotient described in terms of the cycle space of $G$. 

\begin{notation} Let $H_1(G,\C)$ denote the space of (complex) linear combinations of cycles on $G$; it is a vector space of dimension $b_1$, the first Betti number of $G$, spanned by induced cycles (\cite{Diestel}, 1.9.1). These cycles we write as formal sums $\sum_{e \in I} e$ over subsets $I \subseteq E$ of the edge set.
\end{notation} 

We have the following (see \cite{Horton}, 5.6 or \cite{CLM}, 1.9): 

\begin{proposition} \label{h} If $b_1>1$, the eigenspace $\ker(1-T)$ for $T$ corresponding to the eigenvalue $1$ is isomorphic to the cycle space via
the map $$ \varphi \colon H_1(G,\C) \rightarrow \ker(1-T) \colon \sum_{e \in I} e \mapsto \sum_{e \in I} (\ra{e}-\la{e}). $$ 
\end{proposition}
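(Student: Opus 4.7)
The plan is to characterize $\ker(1-T)$ directly from the eigenvalue equation and match it up with $H_1(G,\C)$ via $\varphi$. After fixing once and for all an orientation $\ra{e}$ for each edge $e\in E$, I would describe $\varphi$ explicitly as the ``antisymmetric lift'' $H_1(G,\C)\subseteq \C^E\to\C^{\E}$ sending $f$ to the vector $v$ with $v_{\ra{e}}=f(e)$ and $v_{\la{e}}=-f(e)$; the original formula $\sum_{e\in I}(\ra{e}-\la{e})$ is recovered when $f$ is the signed indicator of a coherently oriented induced cycle. Injectivity of $\varphi$ is then automatic from the lift.

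The inclusion $\mathrm{Im}(\varphi)\subseteq\ker(1-T)$ is a one-step check. For any $v\in\C^{\E}$ and $w\in V$ introduce the ``outgoing sum'' $S_w:=\sum_{\ra{g}:\,o(\ra{g})=w}v_{\ra{g}}$; the definition of $T$ gives
\[(Tv)_{\ra{e}}=S_{t(\ra{e})}-v_{\la{e}}\qquad\text{for every }\ra{e}\in\E.\]
When $v=\varphi(f)$, one has $S_w=-\partial f(w)=0$ at every $w\in V$ (because $f$ is a cycle), so $(Tv)_{\ra{e}}=-v_{\la{e}}=v_{\ra{e}}$ by antisymmetry.

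The main step, and the one where the hypothesis $b_1>1$ will intervene, is surjectivity. Given $v\in\ker(1-T)$, applying the eigenvalue equation to both $\ra{e}$ and $\la{e}$ gives $v_{\ra{e}}+v_{\la{e}}=S_{t(\ra{e})}=S_{o(\ra{e})}$, so the function $w\mapsto S_w$ is constant along every edge and hence (by connectedness of $G$) constant on $V$; call this common value $S$. I would then double-count $\sum_{\ra{g}\in\E}v_{\ra{g}}$ two ways: grouping by origin gives $\sum_{w\in V}S_w=|V|\cdot S$, while grouping by unoriented edge gives $\sum_{e\in E}(v_{\ra{e}}+v_{\la{e}})=|E|\cdot S$. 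The resulting identity $(|V|-|E|)S=0$ together with the hypothesis $b_1>1$, equivalent to $|E|>|V|$, forces $S=0$. Then $v$ is antisymmetric, and $f(e):=v_{\ra{e}}$ satisfies $\partial f(w)=-S_w=0$ everywhere, so $f\in H_1(G,\C)$ and $v=\varphi(f)$.

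Apart from minor orientation bookkeeping, the argument is routine; the one point that has any bite is the double-counting trick that kills $S$, and its sharpness is visible in the excluded case $b_1=1$ (a single cycle), where the all-ones vector sits in $\ker(1-T)$ without lying in the image of $\varphi$.
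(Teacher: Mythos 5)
Your proof is correct, but it takes a genuinely different route from the paper's. The paper shows only that $\varphi$ is well-defined and injective, and then concludes by a dimension count: it cites the Bass--Hashimoto result that the \emph{algebraic} multiplicity of the eigenvalue $1$ in $\det(\lambda-T)$ equals $b_1$, which bounds $\dim\ker(1-T)\leq b_1=\dim H_1(G,\C)$. You instead prove surjectivity directly, which is self-contained and does not rely on knowing the characteristic polynomial of $T$. The mechanism is nice: from $v\in\ker(1-T)$ you extract that the ``out-sum'' $S_w$ is edge-constant (using the eigenvalue equation at both $\ra{e}$ and $\la{e}$), hence globally constant $S$ by connectedness, and the double count $\sum_{\ra{g}\in\E}v_{\ra{g}}=|V|S=|E|S$ kills $S$ precisely because $b_1>1\iff|E|>|V|$. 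That forces antisymmetry of $v$ and the cycle condition $\partial f=0$. Your approach has the advantage of independently establishing that the geometric multiplicity of the eigenvalue $1$ is exactly $b_1$ (so together with the paper's later Corollary this recovers semi-simplicity at $1$ from scratch), and it makes the role of $b_1>1$ transparent; the paper's argument is shorter at the cost of quoting an external multiplicity theorem. Your closing remark about the all-ones vector on a single cycle correctly pins down why the dichotomy is sharp. One small bookkeeping point worth stating explicitly: in the multigraph setting your formula $(Tv)_{\ra{e}}=S_{t(\ra{e})}-v_{\la{e}}$ is still valid for loops, because $\la{e}$ is the unique excluded successor and $\ra{e}$ itself \emph{is} a legitimate successor of $\ra{e}$ at a loop; and a loop contributes $f(e)-f(e)=0$ to $S_w$, consistent with $\partial$ vanishing on loops.
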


Since we will use concepts and notation from the (short) proof, we outline it here: 

\begin{proof}
Since $b_1>1$, the multiplicity of the eigenvalue $1$ in the characteristic polynomial of $T$ is equal to the first Betti number $b_1$ (\cite{Bass}, II.5.10(b)(i); \cite{Hashimoto}, 5.26). It follows that $\ker(1-T)$ has dimension $ \leq b_1$. Therefore, it suffices to prove that the map $\varphi$ is well-defined and injective. 

To show well-definedness of the linear map $\varphi$, fix an induced cycle $c=e_1+\dots +e_r$. Assume that we read the indices of the edges $e_i$ occuring in $c$ as indexed by integers modulo $r$. 
\begin{figure}[h] 
\begin{tikzpicture}
 \draw node at (0,-0.5) {$c$}; 
  \draw node at (0,0.5) {$v$}; 
  \draw node at (-0.8,0.7) {$e_i$}; 
   \draw node at (0.9,0.7) {$e_{i+1}$}; 
    \draw node at (0,2.7) {$B_v$};
    \draw [thick, decorate,decoration={brace, amplitude=8pt}] (-1,2.1) -- (1,2.1);
   \draw[ultra thick] (0,1) -- (-0.7,1.7);
   \draw[ultra thick] (0,1) -- (0.7,1.7);
    \draw[ultra thick] (0,1) -- (0,2);
  \draw[->,ultra thick,dashed] (-1,0) -- (-0.5,0.5); \draw[ultra thick,dashed] (-0.5,0.5) -- (0,1);
     \draw[->, ultra thick, dashed] (0,1) -- (0.5,0.5);  \draw[ultra thick,dashed] (0.5,0.5) -- (1,0);
     \draw[ultra thick,dashed] (-1,-1) -- (-1,0);
     \draw[ultra thick,dashed] (1,-1) -- (1,0);
   \draw (-1,0)  node[circle, inner sep = 0pt, minimum height=2mm, draw, fill = lightgray] {};
      \draw (0,2)  node[circle, inner sep = 0pt, minimum height=2mm, draw, fill] {};
      \draw (-0.7,1.7)  node[circle, inner sep = 0pt, minimum height=2mm, draw, fill] {};
      \draw (0.7,1.7)  node[circle, inner sep = 0pt, minimum height=2mm, draw, fill] {};
         \draw (1,0)  node[circle, inner sep = 0pt, minimum height=2mm, draw, fill = lightgray] {};
          \draw (0,1)  node[circle, inner sep = 0pt, minimum height=2mm, draw, fill] {};
\end{tikzpicture}
    \caption{The ``bush'' of edges $B_v$ at the vertex $v$, w.r.t.\ a cycle $c=\dots + e_i+e_{i+1}+\dots$}
    \label{fig0}
\end{figure}
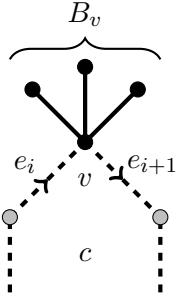

For a vertex $v \in e_j$, let $$B_v = \sum_{\substack{o(\ra{e})=v \\ e \notin c}}  \ra{e}$$ denote the ``bush'' of edges outside the cycle $c$ emanating from the origin of $\ra{e}$ (see Figure \ref{fig0}). Note that if $e \in c$, then $B_{t(e_i)} =B_{o(e_{i+1})}$.

Now 
\begin{align*} T  \left(\sum (\ra{e_i}-\la{e_i})\right)  & = \sum \left(\ra{e}_{i+1} + B_{t(e_i)} - \la{e}_{i-1} - B_{o(e_i)}\right) = \sum (\ra{e_i}-\la{e_i}),  \end{align*}
so indeed, $\varphi(c) \in \ker(1-T)$. 
Finally, the injectivity of $\varphi$ follows immediately from the linear independence of the elements $\ra{e},\la{e}$ (for $e \in E$) in the space $\C^{2|E|}$ on which the operator $T$ acts: if $$\sum_{e \in E} a_{\ra{e}} \ra{e} - \sum_{e \in E} a'_{\la{e}} \la{e} = 0,$$ for some $a_{\ast} \in \C$, then $\sum a_{\ra{e}} e = 0$, so only the zero  cycle is mapped to zero. 
\end{proof} 

\begin{remark} If $b_1=1$, the map $\varphi$ is not an isomorphism, but can still be described in terms of edges (\cite{CLM}, 1.14). Since we assume $\bar d \geq 4$, we have $b_1 =|E|-|V|+1 \geq |V|+1>1$.  \end{remark}

Next we consider the eigenspace of eigenvalue $-1$.
 
\begin{notation} The integer $p$ is defined by $p=0$ if $G$ is bipartite and $p=1$ otherwise. Let $H_1^+(G,\C)$ denote the subspace of $H_1(G,\C)$ generated by cycles of \emph{even} length. 
\end{notation} 
We have $$H_1(G,\C) = H^+_1(G,\C) \oplus \C^p.$$ Indeed, a graph is bipartite if and only if all cycles are even (\cite{Diestel}, 1.6.1), and if the graph is not bipartite, let $c_1,\dots,c_r,c_{r+1},\dots c_{b_1}$ denote a basis for its cycle space based at a common vertex $v_0$, in which the first $r$ cycles are even and the remaining are odd. Then $$c_1,\dots,c_r,c_r+c_{b_1},\dots,c_{b_1-1}+c_{b_1},c_{b_1}$$ is a basis in which the first $b_1-1$ cycles are even and the final one is not. 

\begin{proposition} \label{hplus} For every even cycle $c=\sum_{e \in I} e$, choose a proper 2-coloring $\kappa_c \colon I \rightarrow \{\pm 1\}$ of the edges of $c$. Then the map $$ \psi \colon H_1^+(G,\C) \rightarrow \ker(1+T) \colon c=\sum_{e \in I} e \mapsto \sum_{e \in I} \kappa_c(e) (\ra{e}+\la{e}) $$ is an isomorphism of complex vector spaces.
\end{proposition}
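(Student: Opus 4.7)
The plan is to adapt the argument of Proposition \ref{h} in three steps: verify that $\psi(c) \in \ker(1+T)$ for each even cycle $c$; use Bass's identity (\ref{bass}) to establish the dimension bound $\dim \ker(1+T) \le b_1 - p$; and show $\psi$ is injective by a linear-independence argument on the image vectors. For the first step, fix a simple even cycle $c = e_1 + \cdots + e_r$ (with $r$ even), oriented so that $t(\ra{e_i}) = o(\ra{e_{i+1}})$ (indices mod $r$), and take $\kappa_c(e_i) = (-1)^i$. Re-using the bush decomposition from the proof of Proposition \ref{h} ($T\ra{e_i} = \ra{e_{i+1}} + B_{t(e_i)}$ and $T\la{e_i} = \la{e_{i-1}} + B_{o(e_i)}$), one computes
\[
T\psi(c) = \sum_i (-1)^i \bigl( \ra{e_{i+1}} + \la{e_{i-1}} \bigr) + \sum_i (-1)^i \bigl( B_{t(e_i)} + B_{o(e_i)} \bigr).
\]
Reindexing the main terms ($i \mapsto i \mp 1$) picks up a sign and gives $-\psi(c)$; the bush terms cancel via $B_{t(e_i)} = B_{o(e_{i+1})}$ combined with the alternation $(-1)^i$.

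For the dimension bound, Bass's identity decomposes the multiplicity of $u=-1$ as a root of $\det(1 - Tu)$ into $|E|-|V|=b_1-1$ (from the factor $(1-u^2)^{|E|-|V|}$) plus the order of vanishing at $u=-1$ of $B(u) := \det(1-Au+(D-1)u^2)$. Since $B(-1) = \det(A+D)$ (the signless Laplacian), this contribution is $0$ for non-bipartite $G$ and exactly $1$ for bipartite $G$: in the latter case, $\ker(A+D)$ is spanned by the bipartition signing $\sigma$, and the first derivative of $1 - Au + (D-1)u^2$ at $u=-1$ sends $\sigma$ to $-(D-2I)\sigma$, which is not in $\mathrm{image}(A+D)=\sigma^\perp$ (since $\langle (D-2I)\sigma,\sigma\rangle = 2|E|-2|V|>0$ under $\bar d\ge 4$). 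The algebraic multiplicity of $-1$ as eigenvalue of $T$ is therefore $b_1-p = \dim H_1^+(G,\C)$, so $\dim\ker(1+T)\le b_1-p$.

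For injectivity, choose a basis of $H_1^+(G,\C)$ consisting of simple even cycles $c_1,\dots,c_{b_1-p}$ (which exists under $\bar d\ge 4$); if $\psi(\sum a_j c_j)=0$, the linear independence of $\{\ra{e}+\la{e}\}_{e\in E}$ in $\C^{2|E|}$ yields $\sum_j a_j v_j=0$ in $\C^E$, where $v_j:=\sum_{e\in c_j}\kappa_{c_j}(e)\,e$ lies in $\ker M$ (the kernel of the unsigned vertex–edge incidence matrix of $G$, of dimension $b_1-p$). The main obstacle is to prove that the $v_j$ are linearly independent---equivalently, that signed even cycles span $\ker M$. This can be handled by passing to the bipartite double cover $\widetilde G$, whose signed cycle space decomposes under the deck involution $\tau$ into a $\tau$-invariant subspace canonically isomorphic to $\ker M(G)$ and spanned by symmetrizations of lifts of even cycles. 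Once this spanning is established, the $a_j$ vanish, $\psi$ is injective, and by the matching dimensions $\psi$ is an isomorphism.
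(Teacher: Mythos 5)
Your plan correctly follows the paper's outline (well-definedness, dimension bound, injectivity), and Steps~1 and~2 are sound. Step~2 is a genuine alternative to the paper: the paper simply cites Bass and Hashimoto for the algebraic multiplicity of $-1$, whereas you derive it from the Bass identity (\ref{bass}) by analyzing the order of vanishing of $B(u)=\det(1-Au+(D-1)u^2)$ at $u=-1$. Your signless Laplacian computation is correct: $B(-1)=\det(A+D)$, and in the bipartite case $P'(-1)\sigma=-(D-2I)\sigma$ pairs nontrivially with $\sigma$ because $\sum_v(\deg v-2)=2|E|-2|V|>0$. This is more self-contained than the citation, at the cost of a genuinely nontrivial extra argument (the one-jet computation to show the zero is simple); both routes are valid, and yours has the merit of illuminating \emph{why} the multiplicity is $b_1-p$.

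Step~3 is where there is a real issue, and interestingly it is a subtlety the paper itself treats too tersely (``follows again from the linear independence of the elements $\ra{e},\la{e}$''). You correctly observe that $\psi(\sum a_j c_j)=0$ only yields $\sum_j a_j v_j=0$ for the \emph{signed} cycles $v_j=\sum_e\kappa_{c_j}(e)e$, not $\sum_j a_j c_j=0$ directly --- so injectivity is not an immediate consequence of linear independence of $\{\ra{e}+\la{e}\}$, unlike the unsigned situation in Proposition~\ref{h}. However, your claimed ``equivalence'' between ``the $v_j$ are linearly independent'' and ``signed even cycles span $\ker M$'' is a non-sequitur: even if the span of \emph{all} signed even cycles equals $\ker M$, this does not by itself show that the \emph{specific} $b_1-p$ vectors $v_1,\dots,v_{b_1-p}$ coming from your chosen basis of $H_1^+$ are independent. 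And the double-cover sketch, as written, only asserts that $\ker M$ is spanned by symmetrized lifts of even cycles; it does not connect back to the particular $v_j$. The gap is closable, but not by the route you describe. A cleaner fix: pick a spanning tree; in the bipartite case all fundamental cycles $c_1,\dots,c_{b_1}$ are even and each contains a unique co-tree edge $f_j$, so the $v_j$ are manifestly independent (look at the $f_j$-coordinate). In the non-bipartite case one needs a basis of $H_1^+$ by simple even cycles chosen so that each carries an edge private to it; this requires a little graph-theoretic care, but it is the kind of bookkeeping the paper is silently assuming. I'd encourage you to spell that out rather than invoke the double cover.
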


\begin{proof}

The multiplicity of the eigenvalue $-1$ in the characteristic polynomial of $T$ is $b_1-p$ (\cite{Bass}, II.5.10(b)(ii); \cite{Hashimoto}, 5.32). It suffices to prove that the map $\psi$ is well-defined and injective. For well-definedness, fix an induced even cycle $c=e_1+\dots +e_r$ as before. Without loss of generality, we can assume $\kappa_c(e_j) = (-1)^j$. Then, using the notation for ``bushes'' from the proof of Proposition \ref{h}, we find
\begin{align*} T & \left(\sum_{2 \mid i}(\ra{e_i}+\la{e_i}) - \sum_{2 \nmid i}(\ra{e_i}+\la{e_i})  \right) \\ & = \sum_{2 \mid i} (\ra{e}_{i+1} + B_{t(e_i)} + \la{e}_{i-1} + B_{o(e_i)}) -  \sum_{2 \nmid i} (\ra{e}_{i+1} + B_{t(e_i)} + \la{e}_{i-1} + B_{o(e_i)})   \\ & =  \sum_{2 \mid i} (\ra{e}_{i+1} + \la{e}_{i-1} ) -  \sum_{2 \nmid i} (\ra{e}_{i+1} + \la{e}_{i-1})  \\ & =  \sum_{2 \nmid j} (\ra{e}_{j} + \la{e}_{j-2} ) -  \sum_{2 \mid j} (\ra{e}_{j} + \la{e}_{j-2}) \\ & = - \left(\sum_{2 \mid i}(\ra{e_i}+\la{e_i}) - \sum_{2 \nmid i}(\ra{e_i}+\la{e_i})  \right),  \end{align*}
so $\psi$ is well-defined. The injectivity of $\psi$ follows again from the linear independence of the elements $\ra{e},\la{e}$ (for $e \in E$). 
\end{proof}

\begin{corollary} \label{mul} The eigenvalues $\pm 1$ are semi-simple for the operator $T$, of respective multiplicities $|E|-|V|+1$ and $|E|-|V|+1-p$. \qed
\end{corollary}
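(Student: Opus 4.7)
The plan is to combine Propositions \ref{h} and \ref{hplus} with the Bass--Hashimoto formulas for the algebraic multiplicities of $\pm 1$ that were already invoked in their proofs. Recall that an eigenvalue of a linear operator is semi-simple precisely when its geometric multiplicity (the dimension of the eigenspace) coincides with its algebraic multiplicity (the multiplicity as a root of the characteristic polynomial).

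For the eigenvalue $+1$, I would apply Proposition \ref{h}: the map $\varphi$ exhibits an isomorphism $H_1(G,\C) \cong \ker(1-T)$, so the geometric multiplicity equals $\dim H_1(G,\C) = b_1 = |E|-|V|+1$. On the other hand, the algebraic multiplicity of $+1$ as a root of $\det(\lambda-T)$ is also $b_1$ by \cite{Bass}, II.5.10(b)(i) (equivalently \cite{Hashimoto}, 5.26), which is the fact that was used at the start of the proof of Proposition \ref{h} to bound $\dim \ker(1-T)$ from above. Equality of the two multiplicities gives semi-simplicity of $+1$ with multiplicity $|E|-|V|+1$.

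For the eigenvalue $-1$, the argument proceeds in exactly the same way using Proposition \ref{hplus}: the map $\psi$ identifies $\ker(1+T)$ with $H_1^+(G,\C)$, whose dimension is $b_1 - p$, and the algebraic multiplicity of $-1$ in $\det(\lambda-T)$ is likewise $b_1 - p$ by \cite{Bass}, II.5.10(b)(ii) (equivalently \cite{Hashimoto}, 5.32). Hence $-1$ is semi-simple of multiplicity $|E|-|V|+1-p$.

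There is no real obstacle here; the corollary is essentially a repackaging of Propositions \ref{h} and \ref{hplus} once one observes that the geometric multiplicities computed there already saturate the algebraic multiplicities quoted from Bass and Hashimoto. The only minor point to keep in mind is the running hypothesis $b_1 > 1$ (guaranteed by $\bar d \geq 4$, as noted in the remark following Proposition \ref{h}), which is needed for the isomorphism in Proposition \ref{h}.
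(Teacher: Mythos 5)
Your proof is correct and matches the paper's intended argument: the corollary is indeed an immediate consequence of Propositions \ref{h} and \ref{hplus}, which show the geometric multiplicities of $\pm 1$ equal the algebraic multiplicities $b_1$ and $b_1-p$ quoted from Bass and Hashimoto in those proofs. The paper marks the corollary with \qed precisely because no further argument is needed beyond this observation.
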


There are examples (such as the complete 4-graph with one edge deleted \cite{Terras}, Example 2.8) in which all other eigenvalues of $T$, apart from $\pm 1$, are simple and semi-simple. This shows that one cannot expect a more general statement than \ref{mul} concerning multiplicities of eigenvalues of $T$. 

\section{Reconstruction of closed non-backtracking walks} \label{walks}

For a positive integer $r$, the entry of $T^r$ at place $\ra{e_1}, \ra{e_2}$ is the number of non-backtracking walks that start in the direction of the oriented edge $\ra{e_1}$ and end at the oriented edge $\ra{e_2}$. Let $$N_r(\ra{e})=T^r_{\ra{e},\ra{e}}$$ denote the number of closed such walks through an oriented edge $\ra{e} \in \mathbf{E}$. Observe that by symmetry (``walking backwards''), $N_r(\ra{e})=N_r(\la{e})$. 
For an unoriented edge $e \in E$, $N_r(e)=2N_r(\ra{e})$ (for any choice $\ra{e}$ of orientation on $e$),  denotes the number of oriented non-backtracking closed walks that pass through $e$. The total number of non-backtracking unoriented closed walks of length $r$ in $G$ is $$N_r = \sum_{e \in E} N_r(e) = \mathrm{tr}(T^r)=\sum_{\ra{e} \in \mathbf{E}} T^r_{\ra{e},\ra{e}}, $$ where $\mathrm{tr}$ denotes the trace of a matrix. 

\begin{theorem}[Theorem \ref{main}\textup{(ii)}] Let $G$ denote a graph of average degree $\bar d \geq 4$; then the number of non-backtracking closed walks on $G$ of given length is edge-reconstructible. 
\end{theorem}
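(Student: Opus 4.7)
The plan is to derive $N_r$ directly from the Ihara zeta function, which is edge-reconstructible by part (i) of the main theorem (proved just before this section).

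The starting point is the identity $\zeta_G(u)^{-1} = \det(1-Tu)$ from equation (\ref{sel2}). Taking the formal logarithm and expanding gives
\begin{equation*}
\log \zeta_G(u) \;=\; -\log \det(1-Tu) \;=\; -\mathrm{tr}\bigl(\log(1-Tu)\bigr) \;=\; \sum_{r \geq 1} \frac{\mathrm{tr}(T^r)}{r}\, u^r \;=\; \sum_{r \geq 1} \frac{N_r}{r}\, u^r,
\end{equation*}
where the last equality uses the definition $N_r = \mathrm{tr}(T^r)$ given above. Hence $N_r = r \cdot [u^r]\log\zeta_G(u)$ is determined by $\zeta_G$.

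Since the hypothesis $\bar d \geq 4$ already gives the edge-reconstructibility of $\zeta_G$ by Theorem \ref{main}(i) proved in Section \ref{Sbass}, it follows that $N_r$ is edge-reconstructible for every $r \geq 1$. The only step that requires care is the legitimacy of the logarithmic expansion, but since $\zeta_G^{-1}(u)$ is a polynomial with constant term $1$, the series $\log \zeta_G(u)$ is a well-defined formal power series in $u$, and the identity above is a standard formal-power-series identity (the Newton identities applied to the eigenvalues of $T$).

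There is no real obstacle here: all the work was done in proving (i). The alternative route mentioned in the introduction would bypass Lemma \ref{pol} by first using Kelly's Lemma to reconstruct $N_r$ for $r < |E|$ combinatorially (each closed non-backtracking walk of length less than $|E|$ must miss some edge, so $N_r$ is determined by counting such walks in enough edge-deleted subgraphs), and then recovering the remaining coefficients of $\det(1-Tu)$, and hence of $\zeta_G$, via the Newton identities; this would invert the logic above and re-prove (i) along the way. For the present theorem, however, the direct derivation from (i) is cleanest.
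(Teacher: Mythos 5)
Your argument is exactly the paper's proof: both take the formal-power-series identity $\log \zeta_G = -\log\det(1-uT) = \sum_{r\geq 1}\frac{\mathrm{tr}(T^r)}{r}u^r$ and combine it with the edge-reconstructibility of $\zeta_G$ from part (i). The extra remarks you add (justifying the formal logarithm, sketching the Kelly's Lemma alternative) are correct and in fact mirror observations made elsewhere in the paper.
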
 

\begin{proof}The claim follows directly from the formal power series identity 
\begin{equation} \label{logdet} \log \zeta_G = - \log \det(1-uT) = \sum_{n \geq 1} \frac{\mathrm{tr}(T^n)}{n} u^n \end{equation} (easily proven by triagonalizing the matrix $T$ over $\C$) and part (i) of the theorem. 
\end{proof}

We now refine this result, in analogy with the vertex situation studied by Godsil and McKay in \cite{Godsil} (but our proofs are rather different, since we do not have a semi-simple operator and we cannot rely on reconstruction results for complementary graphs). 

\begin{remark} \label{mn} We define the value of $N_r$ (and other similar functions) at an element $H=G-e \in \mathcal D^e(G)$ of the edge deck to be equal to $N_r(e)$. Since $\mathcal D^e(G)$ is a multiset,  it is possible that $G-e \cong G-e'$ for two different edges $e$ and $e'$. Our methods of proof imply that the value $N_r(e)$ only depends on the isomorphism type of $H$, not on the edge $e$, and thus, $N_r$ is well-defined on the edge deck.  
\end{remark} 

\begin{theorem}[Theorem \ref{main}\textup{(iii)(a)}] Let $G$ denote a graph of average degree $\bar d>4$. Then the function $N_r \colon \mathcal{D}^e(G) \rightarrow \Z$ that associates to an element $G-e$ of the edge deck $\mathcal{D}^e(G)$ of $G$ the number of  non-backtracking closed walks on $G$ of given length passing through $e$ is edge-reconstructible.
\end{theorem}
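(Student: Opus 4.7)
The plan is to reconstruct $N_r(e) = 2\, (T_G^r)_{\ra{e}, \ra{e}}$ from the deck by combining a combinatorial reconstruction for short walks ($r < |E|$) with a spectral extrapolation via the Jordan structure of $T_G$ for larger $r$.

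For $r < |E|$, a non-backtracking closed walk of length $r$ starting at $\ra{e}$ uses at most $r < |E|$ distinct edges (one being $e$), so at least one edge $e' \ne e$ is unused and the walk is contained in $G - e' \in \mathcal{D}^e(G)$. I would then use Kelly--Lov\'asz averaging over edge deletions: writing $(T_G^r)_{\ra{e}, \ra{e}}$ as a weighted sum over pointed edge-patterns $(F, e_F)$ with $|E(F)| \le r$ of subgraph counts $n_G(F, e_F; e)$ (with weight the number of length-$r$ closed non-backtracking walks on $F$ starting at $\ra{e_F}$ and using every edge), each such count is reconstructed inductively via the identity
$$\sum_{e' \ne e} n_{G - e'}(F, e_F; e) = (|E| - |E(F)|)\, n_G(F, e_F; e).$$
The resulting expression then refers only to the edge deck, yielding the well-definedness asserted in Remark \ref{mn}. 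For $r \ge |E|$, the sequence $a_r := (T_G^r)_{\ra{e}, \ra{e}}$ satisfies the linear recurrence determined by the minimal polynomial of $T_G$; using the Jordan decomposition,
$$a_r = \sum_\lambda \sum_{k = 0}^{j_\lambda - 1} c_{\lambda, k}(\ra{e})\, \binom{r}{k}\, \lambda^{r-k},$$
with $j_\lambda$ the size of the largest Jordan block at $\lambda$, one recovers $c_{\lambda, k}(\ra{e})$ by inverting the generalized Vandermonde (confluent alternant) matrix consisting of the values of the basis functions $\binom{r}{k}\lambda^{r-k}$ at $r = 0, 1, \ldots, D - 1$, where $D$ is the degree of the minimal polynomial of $T_G$; this matrix is invertible since its determinant is a nonzero product of powers of differences of distinct eigenvalues.

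To close the argument I must ensure $D \le |E|$, so that the combinatorially accessible values $a_0, \ldots, a_{|E|-1}$ suffice. By Bass's formula (\ref{bass}), $\mathrm{spec}(T_G)$ consists of $\pm 1$ together with the $2|V|$ roots (with multiplicity) of $B(\lambda)$; Propositions \ref{h} and \ref{hplus} give that the multiplicities of $\pm 1$ as roots of $B$ are $1$ and $1 - p$, respectively. Since $\pm 1$ are semi-simple for $T_G$ by Corollary \ref{mul}, they contribute only the simple factors $(\lambda - 1)(\lambda + 1)$ to the minimal polynomial; the remaining roots of $B$ have total algebraic multiplicity $2|V| - 2 + p$ and thus contribute at most that much to the minimal polynomial degree. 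Hence $D \le 2 + (2|V| - 2 + p) = 2|V| + p \le |E|$ under $\bar d > 4$ (forcing $|E| \ge 2|V| + 1$), and also under $\bar d = 4$ with $G$ bipartite ($p = 0$, $|E| = 2|V|$), matching the hypotheses of the theorem. The main obstacle is the bookkeeping in the combinatorial step: organizing the inductive Kelly--Lov\'asz reconstruction of $n_G(F, e_F; e)$ so that the identification of the edge $e$ inside each $G - e'$ (an edge that is only known up to isomorphism in the deck) is handled consistently, thereby ensuring that the overall formula depends only on the isomorphism class of $G - e$.
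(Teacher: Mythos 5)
Your spectral step is essentially the paper's: you write $a_r = (T^r)_{\ra{e},\ra{e}}$ as $\sum_\lambda \sum_k c_{\lambda,k}\binom{r}{k}\lambda^{r-k}$, invert the confluent alternant (generalized Vandermonde) matrix using nonvanishing of its determinant, and bound the degree $D$ of the minimal polynomial by $2|V|+p$ via the semi-simplicity and multiplicities of $\pm 1$ from Corollary~\ref{mul}. The paper proves the same Lemma~\ref{J} with $M=D$ and a marginally cruder estimate $M-1\leq 2|V|$, but both give $D\leq |E|$ under $\bar d>4$, so this part is fine (and your sharper bound is what yields the bipartite, $\bar d=4$ case).

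The combinatorial step ($r<|E|$) has a genuine gap, which you yourself flag as ``the main obstacle'' but do not resolve. Your averaging identity
$$\sum_{e' \ne e} n_{G - e'}(F, e_F; e) = (|E| - |E(F)|)\, n_G(F, e_F; e)$$
is a correct counting statement, but its left-hand side cannot be evaluated from the edge deck: each card $G-e'$ is given only as an abstract isomorphism class, so there is no way to locate the distinguished edge $e$ inside it, and hence no way to compute the pointed count $n_{G-e'}(F,e_F;e)$. The induction therefore does not close, and the claim that ``the resulting expression refers only to the edge deck'' is unjustified. The paper circumvents this entirely by never pointing the subgraph pattern: it writes
$$N_r(\ra{e}) = \tfrac{1}{2} \sum_{H,\, |E(H)|\leq r} P_r(H)\bigl(S(H,G)-S(H,G-e)\bigr),$$
where $S(H,G)$ is the \emph{unpointed} number of subgraphs of $G$ isomorphic to $H$, reconstructible by Kelly's Lemma from the whole deck, while $S(H,G-e)$ is read directly off the given card $G-e$. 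The pointing is pushed into the walk weight $P_r(H)$, which is an intrinsic quantity of the abstract pattern $H$ and requires no reference to $e$. Replacing your pointed counts with this unpointed difference is what you need to repair the argument; as written, the proposal does not establish the $r<|E|$ base case.
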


\begin{proof}
As a first step, we use the Jordan normal form of $T$ to prove the following: 

\begin{lemma} \label{J} The values $N_r(e)$ for all $r \in \Z_{\geq 0}$ are uniquely determined by the values $N_r(e)$ for $r \leq M-1$, where $M$ is the sum of the maximal sizes of Jordan blocks for the different eigenvalues of $T$.
\end{lemma} 

\begin{proof}[Proof of Lemma \ref{J}] Suppose that $T$ has $N$ distinct eigenvalues $\lambda_1,\dots,\lambda_N$. Let $m_i$ denote the multiplicity of $\lambda_i$. Suppose that $\lambda_i$ occurs in $\ell_i$ different Jordan blocks, and let $\mu_{i,j}$ denote the size of the $j$-th such block ($j=1,\dots,\ell_i$), so that $m_i = \sum_j \mu_{i,j}$.  Let $P$ denote the matrix whose columns are a complete set of generalized eigenvectors for $T$, then $T=P\Lambda P^{-1}$, where $\Lambda$ is a Jordan normal form of $T$.   Fix an (oriented) edge $\ra{e}$. All vectors will depend on $\ra{e}$, but, for readability, we will mostly suppress it from the notation. 
If $x_{\ra{e}}$ is the $2|E|$-column vector with a $1$ in place $\ra{e}$ and $0$ elsewhere, then 
\begin{equation} \label{nre} N_r(\ra{e}) = x_{\ra{e}}^{\intercal} T^r x_{\ra{e}} =  v \Lambda^r v', \end{equation}
where $v=x_{\ra{e}}^{\intercal} P$ and $v'= P^{-1} x_{\ra{e}}$. 

Expanding the powers of the Jordan normal form, we find that 
\begin{equation} \label{fff} N_r(\ra{e}) = \sum_{i=1}^N \sum_{j=1}^{\ell_i} \sum_{k=0}^{\mu_{i,j}-1} \lambda_i^{r-k} \binom{r}{k} w_{i,j,k}  \end{equation}
for some constants 
$$ w_{i,j,k} = \sum_{l=a_{i,j}}^{a_{i,j}+\mu_{i,j}-k} v_l v'_{l+k}, \mbox{ with } a_{i,j}:=\sum_{\substack{i_0 \leq i \\ j_0<j}} \mu_{i_0,j_0}. $$
Let $$M_i:= \max \{ \mu_{i,j} \colon j \} \mbox{ and } M=\sum_{i=1}^N M_i.$$ Set new variables $w_{i,j,k}=0$ when $k \geq \mu_{i,j}$; with this convention, we can replace the third summation in (\ref{fff}) by $k=0,\dots,M_i-1$, independent of $j$. We then collect terms in $j$, to find that there exists constants $y_{i,k}$ such that 
\begin{equation} \label{ttt} N_r(\ra{e}) = \sum_{i=1}^N \sum_{k=0}^{M_i-1} \lambda_i^{r-k} \binom{r}{k} y_{i,k}; \end{equation}
namely, $$y_{i,k}:= \sum_{j=1}^{l_i} w_{i,j,k}. $$
The set of equations (\ref{ttt}) can be written in matrix form as 
$$\mathbb{V} Y = \mathbf{N}, $$
 where $Y$ is a column vector consisting of $y_{i,k}$, $\mathbf{N}$ is a column vector with entries $N_i(\ra{e})$ for $i=0,\dots,M-1$, and $\mathbb V$ is the $M \times M$-matrix given as concatenation $$\mathbb V = (\mathbb V_1 | \mathbb V_2 | \dots | \mathbb V_N)$$ with 
 $\mathbb V_i$ an $M \times M_i$ matrix with entries 
 $$ (\mathbb V_i)_{k,l} = \binom{k-1}{k-l-1} \lambda_i^{k-1-l} . $$ 
 Note that $\mathbb V$ is edge-reconstructible by our reconstruction of the spectrum of $T$. 
If $T$ is semi-simple, this is a classical Vandermonde matrix. In general, it is a Vandermonde matrix with inserted columns corresponding to powers of the nilpotent part of $T$; it is the matrix consisting of generalized eigenvectors for the companion matrix of the characteristic polynomial of $T$ and historically known as a ``confluent alternant'' \cite{Kalman}. We have (loc.\ cit., Formula (14))
$$ \det \mathbb V = \pm \prod_{i<j} (\lambda_i-\lambda_j)^{M_i \cdot M_j} \neq 0, $$
 and hence $\mathbb V$ is invertible. Therefore, $Y$ is uniquely determined by $\mathbf{N}$, and $N_r(e)$ is uniquely determined for all $r$ by its values for $r \leq M-1$. 
 \end{proof} 
 
As a second step, we prove that for $\bar d >4$, $M-1 <|E|$. Indeed, recall from Corollary \ref{mul} that $T$ has semi-simple eigenvalue $\lambda_1 = +1$ with multiplicity $|E|-|V|+1$ and semi-simple eigenvalue $\lambda_2 = -1$ with multiplicity at least $|E|-|V|$. Hence $M_1=M_2=1$ and the number $M$ satisfies \begin{equation} \label{bip} M-1 \leq 2+2|E|-(|E|-|V|)-(|E|-|V|+1)-1 = 2|V|.\end{equation} Since we assume $\bar d = 2|E|/|V|> 4$, we have $M-1 < |E|$. 
  
Finally, we show how to reconstruct $N_r(e)$ for $r < |E|$. Suppose that $\mathcal{G}_{i}$ is the set of isomorphism classes of graphs with $i$ edges. Given a graph $H$, let $P_r(H)$ denote the number of distinct closed non-backtracking walks of length $r$ on $H$ that go through every edge of $H$ (possibly multiple times, with no preferred starting edge).  Let $S(H,G)$ denote the number of induced subgraphs of $G$ isomorphic to $H$. For $r<|E|$, we have
$$ N_r(\ra{e}) = \frac{1}{2} \sum_{\substack{H \in \mathcal{G}_{i} \\ i \leq r}} P_r(H)(S(H,G)-S(H,G-e)). $$
Indeed, $S(H,G)-S(H,G-e)$ is the number of induced subgraphs of $G$ isomorphic to $H$ that pass through $e$. Any closed  non-backtracking walk of length $r$ on $H$, embedded in $G$ to pass through $e$, gives rise to such a walk that starts and ends at $e$ in a given direction (for both chosen directions). 

By Kelly's Lemma \ref{Kellys}, since $H$ has less than $|E|$ edges, the right hand side is reconstructible, hence so is the left hand side.

This finishes the proof of the theorem that $N_r(\ra{e})$ is edge-reconstructible for all $r$. 
\end{proof}

\begin{proposition} \label{BP} If $G$ is bipartite of average degree $\bar d \geq 4$, the function $N_r$ is edge-reconstructible for all $r>0$. 
\end{proposition}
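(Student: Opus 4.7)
The plan is to repeat the argument of Theorem \ref{main}(iii)(a) verbatim, with one single change: the bound on $M$ coming from inequality~(\ref{bip}) is sharper when $G$ is bipartite, and this extra room allows us to push the range of applicability from $\bar d > 4$ down to $\bar d \geq 4$.

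First, I would invoke Lemma~\ref{J}: since by Theorem~\ref{main}(i) the spectrum of $T$ (and hence the confluent Vandermonde matrix $\mathbb{V}$) is edge-reconstructible from the edge deck, it suffices to reconstruct the values $N_r(e)$ for $r \leq M-1$, where $M = \sum_i M_i$ is the sum of the maximal Jordan block sizes of $T$.

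Second, I would refine the estimate on $M$ using the bipartite hypothesis. By Corollary~\ref{mul}, when $G$ is bipartite (so $p = 0$), both eigenvalues $\pm 1$ of $T$ are semi-simple and occur with multiplicity exactly $|E|-|V|+1$, so $M_1 = M_2 = 1$. The remaining eigenvalues have combined multiplicity $2|E|-2(|E|-|V|+1) = 2|V|-2$, and each contributes a maximal Jordan block of size at most its multiplicity. Summing gives
\begin{equation*}
M - 1 \;\leq\; 1 + 1 + (2|V|-2) - 1 \;=\; 2|V|-1,
\end{equation*}
which is one better than the estimate $M-1 \leq 2|V|$ used in the general case. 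Since $\bar d \geq 4$ is equivalent to $|E| \geq 2|V|$, we obtain the strict inequality $M - 1 \leq 2|V| - 1 < |E|$, so once again the values $N_r(e)$ for $r < |E|$ determine all of them.

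Third, I would close the argument by copying verbatim the final Kelly's Lemma step from the proof of Theorem~\ref{main}(iii)(a): for $r < |E|$ the identity
\begin{equation*}
N_r(\ra{e}) = \frac{1}{2} \sum_{\substack{H \in \mathcal{G}_i \\ i \leq r}} P_r(H) \bigl( S(H,G) - S(H,G-e) \bigr)
\end{equation*}
is edge-reconstructible by the edge version of Kelly's Lemma, and this propagates to all $r$ via Lemma~\ref{J}. There is really no additional obstacle: the only place where $\bar d > 4$ was needed in the non-bipartite case was to make (\ref{bip}) strict, and the bipartite improvement of Corollary~\ref{mul} supplies exactly the missing unit.
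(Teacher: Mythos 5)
Your proof is correct and is essentially the paper's own argument, merely spelled out in more detail: the paper likewise observes that bipartiteness ($p=0$) raises the multiplicity of $-1$ to $|E|-|V|+1$, improving the bound in (\ref{bip}) to $M-1\leq 2|V|-1$, so that $M-1<|E|$ already holds when $\bar d=4$. Your arithmetic checks out, so nothing is missing.
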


\begin{proof}
If $G$ is bipartite, then the eigenvalue $-1$ also has multiplicity $|E|-|V|+1$ (cf.\ Corollary \ref{mul}), so the estimate $M-1<|E|$ in Equation (\ref{bip}) holds even if $\bar d = 4$. 
\end{proof}

 We now give another proof of part (i) of Theorem \ref{main} along the lines of the previous proof, which has a more combinatorial flavour  and avoids using Lemma \ref{pol} (but does not lead directly to the inductive formula from Theorem \ref{tfirst}). 

\begin{proof}[Second proof of Theorem \ref{main}(i)] 
The result of Bass (\cite{Bass}, II.5.4) says that we can write $$\det(1-Tu) = (u-1)^{|E|-|V|+1}(u+1)^{|E|-|V|}D^+(u)$$ for some polynomial $D^+(u)$ of degree $2|V|-1$ with $D^+(0) \neq 0$. Plugging this into the generating series (\ref{logdet}) and take logs, we find
$$ (|E|-|V|+1) \sum_{j \geq 1} \frac{u^{j}}{j} +(|E|-|V|) \sum_{j \geq 1} \frac{(-u)^{j}}{j} - \log D^+(u) = \sum_{r \geq 1} N_r \frac{u^r}{r}. $$
It follows that we know the entire polynomial $\det(1-Tu)$ as soon as we know $D^+(u)$, which happens as soon as we know $N_r$ for all $r \leq 2|V|-1$. With $\bar d = 2|E|/|V| \geq 4$, we need to reconstruct $N_r$ for $r <|E|$. But this can be done using Kelly's Lemma \ref{Kellys}, as follows:
$$N_r = \sum_{\substack{H \in \mathcal{G}_{i} \\ i \leq r}} P_r(H) S(H,G), $$
where $\mathcal{G}_i, P_r$ and $S(H,G)$ are as in the above proof of Theorem \ref{main}\textup{(iii)}. 
\end{proof} 

For $e \in E$, let $F_r(e)$ denote the number of closed non-backtracking walks that pass through $e$ in both directions at least once. Then $F_r(e) = 2 F_r(\ra{e})$, where for an oriented edge $\ra{e} \in \mathbf{E}$, $F_r(\ra{e})$ is the number of closed  non-backtracking walks that start at $\ra{e}$ and pass through $\la{e}$ at least once. 

\begin{theorem}[Theorem \ref{main}\textup{(v)}]  Let $G$ denote a graph of average degree $\bar d>4$. Then the function $F_r \colon \mathcal{D}^e(G) \rightarrow \Z$ that associates to an element $G-e$ of the edge deck $\mathcal{D}^e(G)$ of $G$ the number of  non-backtracking closed walks on $G$ of given length that pass through $e$ in both directions at least once is edge-reconstructible. 
\end{theorem}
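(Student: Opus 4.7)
The plan is to split $F_r(\ra{e}) = N_r(\ra{e}) - C_r(\ra{e})$, where $C_r(\ra{e})$ denotes the number of closed non-backtracking walks of length $r$ starting at $\ra{e}$ that \emph{never} traverse $\la{e}$, and to reconstruct the generating function $\sum_r C_r(\ra{e}) u^r$ in closed form by combining a Schur-complement walk decomposition with Jacobi's identity for the $2 \times 2$ principal minor of the resolvent $(I - uT)^{-1}$ indexed by $\{\ra{e}, \la{e}\}$. Since $N_r(\ra{e}) = N_r(e)/2$ is already edge-reconstructible by Theorem \ref{main}\textup{(iii)(a)} (and by Proposition \ref{BP} in the bipartite $\bar d = 4$ case), only the reconstruction of $C_r(\ra{e})$ is at stake.

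Block-decompose $T$ with respect to $\E = \{\ra{e}, \la{e}\} \sqcup \E'$ as $T = \left(\begin{smallmatrix} X & Y \\ Z & W \end{smallmatrix}\right)$ with $W = T_{G-e}$. In the non-loop case $X = 0$, so any closed NBT walk at $\ra{e}$ factors uniquely as a sequence of ``excursions'' that leave $\{\ra{e}, \la{e}\}$ via $Y$, wander in $G-e$, and return via $Z$. Setting $\phi(u) := Y(I - uW)^{-1} Z$, the Schur-complement formula yields
$$\bigl[(I - uT)^{-1}\bigr]_{\{\ra{e}, \la{e}\}} = (I_2 - u^2 \phi(u))^{-1}.$$
The $J$-symmetry of $T$ (Proposition \ref{krein}) descends to the identities $\phi_{\ra{e}, \ra{e}} = \phi_{\la{e}, \la{e}} =: a(u)$ and $\phi_{\ra{e}, \la{e}} = \phi_{\la{e}, \ra{e}}$, so the $(\ra{e},\ra{e})$-entry of the above inverse equals $(1 - u^2 a(u))/\det(I_2 - u^2 \phi(u)) = \sum_r N_r(\ra{e})u^r$, while walks that avoid $\la{e}$ are arbitrary concatenations of $\ra{e}$-to-$\ra{e}$ excursions, giving $\sum_r C_r(\ra{e}) u^r = (1 - u^2 a(u))^{-1}$. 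On the other hand, Jacobi's identity for principal minors of an inverse matrix gives
$$\det\!\bigl(\bigl[(I - uT)^{-1}\bigr]_{\{\ra{e}, \la{e}\}}\bigr) = \frac{\det(I - u T_{G-e})}{\det(I - uT)} = \frac{\zeta_G(u)}{\zeta_{G-e}(u)},$$
which together with the Schur formula forces $\det(I_2 - u^2\phi(u)) = \zeta_{G-e}(u)/\zeta_G(u)$.

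Assembling the two expressions yields the key identity
$$1 - u^2 a(u) = \Bigl(\sum_{r \geq 0} N_r(\ra{e}) u^r\Bigr) \cdot \frac{\zeta_{G-e}(u)}{\zeta_G(u)},$$
whose right-hand side is edge-reconstructible by Theorems \ref{main}\textup{(i)} and \textup{(iii)(a)} together with the fact that $G - e \in \mathcal D^e(G)$. Inverting this power series and extracting coefficients gives $C_r(\ra{e})$, and $F_r(e) = 2\bigl(N_r(\ra{e}) - C_r(\ra{e})\bigr)$ follows. The main obstacle I anticipate is the clean justification of the walk-decomposition identity $\sum_r C_r(\ra{e}) u^r = (1 - u^2 a(u))^{-1}$: one must check that every closed NBT walk at $\ra{e}$ avoiding $\la{e}$ factors uniquely into excursions of length $\geq 2$ whose endpoints all lie at $\ra{e}$, and that the non-backtracking constraint across excursion boundaries is automatic (it is, because the $\ra{e}$-indexed rows and columns of $Y$ and $Z$ already exclude $\la{e}$ as an adjacent oriented edge). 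The loop case $X = I_2$ requires only the cosmetic replacement of $I_2 - u^2\phi(u)$ by $(1-u)I_2 - u^2\phi(u)$ in the Schur formula and is otherwise identical.
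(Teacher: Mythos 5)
Your proposal is correct, and it takes a genuinely different route from the paper's proof, though both hinge on Jacobi's identity for the $2 \times 2$ principal minor of $(I-uT)^{-1}$ at $\{\ra{e},\la{e}\}$. The paper simply writes out that minor as the matrix $\bigl(\begin{smallmatrix} \sum N_r(\ra{e})u^r & \sum (T^r)_{\ra{e},\la{e}}u^r \\ \sum (T^r)_{\la{e},\ra{e}}u^r & \sum N_r(\ra{e})u^r \end{smallmatrix}\bigr)$, equates its determinant to $\zeta_G/\zeta_{G-e}$ via Jacobi, and reads off the cross-term $\sum_{i} (T^i)_{\ra{e},\la{e}}(T^{r-i})_{\la{e},\ra{e}}$, which it calls $F_r(\ra{e})$. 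You instead compute the complementary count $C_r(\ra{e})$ of closed non-backtracking walks at $\ra{e}$ avoiding $\la{e}$, via a Schur-complement expression for the resolvent block and a renewal (excursion) decomposition, and then set $F_r = N_r - C_r$. Your use of the $J$-symmetry of $T$ to get $\phi_{\ra{e},\ra{e}} = \phi_{\la{e},\la{e}} =: a(u)$ is both correct and necessary, since it is what makes the $(\ra{e},\ra{e})$-entry of $(I_2 - u^2\phi)^{-1}$ equal to $(1-u^2a)/\det(I_2-u^2\phi)$; and your explicit check that the non-backtracking constraint holds at excursion junctions (because $T_{\ra{e},\la{e}}=0$ and $T_{\ra{f},\ra{e}}=1$ already forbid backtracks there) is exactly the right point to flag. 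One thing worth noting in comparing the two proofs: your $F_r = N_r - C_r$ is the honest count of closed walks traversing $\la{e}$ at least once, which is what the theorem statement literally says; the paper's displayed formula for $F_r(\ra{e})$, by contrast, counts a closed walk once for \emph{each} traversal of $\la{e}$, so a walk visiting $\la{e}$ at $k$ positions contributes $k$. Those two quantities disagree in general (in generating-function terms, your $\sum F_r u^r$ is $\beta/(\alpha\cdot\det)$ while the paper's is $\beta/\det^2$, with $\alpha = 1-u^2a$, $\beta = u^4\phi_{\ra{e},\la{e}}\phi_{\la{e},\ra{e}}$, $\det = \alpha^2-\beta$). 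Both are edge-reconstructible by the same ingredients, but your version matches the theorem's wording more precisely, at the cost of slightly heavier machinery (Schur complement, $J$-symmetry of $\phi$, and the renewal identity). The loop case correction ($X = I_2$, replacing $I_2 - u^2\phi$ by $(1-u)I_2 - u^2\phi$ and including length-one excursions $\ra{e}\to\ra{e}$) is also handled correctly.
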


\begin{proof}
First, observe that \begin{equation} \label{f} F_r(\ra{e}) = \sum_{i=0}^r (T^i)_{\ra{e},\la{e}} (T^{r-i})_{\la{e},\ra{e}}. \end{equation}
The edge adjacency matrix $T_{G-e}$  of $G-e$ is the matrix $T$ in which the rows and column corresponding to the edges $\ra{e}$ and $\la{e}$ have been removed. Let $T[e_1,e_2]$ denote the $2\times 2$ matrix in which only the elements in column/row $e_1$ and $e_2$ are preserved. In this situation, Jacobi's identity applied to the matrix $1-uT$ (generalizing from $1\times 1$ minors to $2\times 2$ minors the more familiar formula for an inverse matrix in terms of determinant and adjugate; see e.g., formula (12) in \cite{BS}) states that 
\begin{equation*} \label{quot} \frac{\det(1-uT_{G-e})}{\det(1-uT)} = \det((1-uT)^{-1}[\ra{e},\la{e}]). \end{equation*}
The left hand side of this equation equals $\zeta_G(u)/\zeta_{G-e}(u)$, which is reconstructible by part (i). Since $$(1-uT)^{-1} = \sum_{r \geq 0} u^r T^r,$$ we find that the right hand side equals
\begin{align*} \det ((1-uT)^{-1}[\ra{e},\la{e}])&= \det \left( \begin{matrix}  \sum u^r N_r(\ra{e}) & \sum u^r (T^r)_{\ra{e},\la{e}} \\ \sum u^r (T^r)_{\la{e},\ra{e}} & \sum u^r N_r(\ra{e}) \end{matrix} \right) \\
& = \sum_{r \geq 0} u^r  \left( \sum_{i=0}^r N_i(\ra{e})N_{r-i}(\ra{e})  - F_r(\ra{e}) \right), \end{align*} using the expression for $F_r(\ra{e})$ from (\ref{f}). Since $N_r(\ra{e})$ is edge-reconstructible, we conclude that
the function $F_r(\ra{e})$, and hence $F_r(e)$, is edge-reconstructible. 
\end{proof} 

Similar to Proposition \ref{BP}, we get
\begin{proposition} If $G$ is bipartite of average degree $\bar d \geq 4$, the function $F_r$ is edge-reconstructible for all $r>0$. \qed
\end{proposition}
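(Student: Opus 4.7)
The plan is simply to rerun the proof of Theorem \ref{main}\textup{(v)} verbatim, checking that each ingredient used there remains valid when $G$ is bipartite and $\bar d = 4$. There are exactly two inputs to that proof: (a) reconstructibility of the full Ihara zeta function $\zeta_G$ (used to conclude that $\zeta_G(u)/\zeta_{G-e}(u)$ is edge-reconstructible via Jacobi's identity), and (b) reconstructibility of the diagonal power-sums $N_r(\ra{e})$ (used to solve for $F_r(\ra{e})$ out of the $2\times 2$ determinant
$$\det((1-uT)^{-1}[\ra{e},\la{e}]) = \sum_{r\geq 0} u^r\Bigl( \sum_{i=0}^r N_i(\ra{e}) N_{r-i}(\ra{e}) - F_r(\ra{e}) \Bigr).$$
In particular no step of the proof distinguishes closed walks by length, so it suffices to check (a) and (b) individually.

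First, ingredient (a) is available: Theorem \ref{main}\textup{(i)} already reconstructs $\zeta_G$ under the hypothesis $\bar d \geq 4$, with no parity assumption on the graph, so nothing has to be done there. Second, ingredient (b) is exactly the statement of Proposition \ref{BP}: when $G$ is bipartite with $\bar d \geq 4$, the function $N_r$ is edge-reconstructible for all $r>0$. The reason this extends to the boundary value $\bar d = 4$ was traced in the proof of Proposition \ref{BP} to the fact that in the bipartite case the eigenvalue $-1$ attains the full multiplicity $|E|-|V|+1$ (Corollary \ref{mul} with $p=0$), tightening the estimate (\ref{bip}) to $M-1 \leq 2|V|-1 < |E|$ whenever $\bar d \geq 4$. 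Hence the confluent-alternant step of Lemma \ref{J} and the Kelly-Lemma step for $r<|E|$ go through with only the weak inequality on $\bar d$.

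With (a) and (b) in place, feeding the reconstructed $\zeta_G(u)/\zeta_{G-e}(u)$ and $N_r(\ra{e})$ into the identity above and solving for $F_r(\ra{e})$ recovers $F_r(\ra{e})$, and so $F_r(e)=2F_r(\ra{e})$, as a function on $\mathcal{D}^e(G)$. I do not expect any genuine obstacle: the whole content of the statement is the observation that the strict inequality $\bar d > 4$ was used in Theorem \ref{main}\textup{(v)} only via the general-case reconstruction of $N_r$, and this single input is precisely the one upgraded by Proposition \ref{BP} in the bipartite regime.
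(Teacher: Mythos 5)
Your proposal is correct and is precisely the argument the paper leaves implicit behind its ``Similar to Proposition \ref{BP}'' and \qed: the proof of Theorem \ref{main}(v) needs only reconstructibility of $\zeta_G$ (already available for $\bar d \geq 4$ by part (i)) and of $N_r(\ra{e})$ (upgraded to bipartite $\bar d \geq 4$ by Proposition \ref{BP}), and you correctly traced the $\bar d > 4$ hypothesis to the latter input alone. Your check that the bipartite multiplicity of $-1$ sharpens (\ref{bip}) to $M-1 \leq 2|V|-1 < |E|$ under $\bar d \geq 4$ is also accurate.
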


\section{Reconstruction of non-closed non-backtracking walks } 

We now consider the case of non-backtracking walks between two (possibly different) edges: 

\begin{theorem}[Theorem \ref{main}\textup{(iii)(b)}] Let $G$ denote a graph of average degree $\bar d>4$. Then the function $M_r \colon \mathcal{D}^e(G) \rightarrow \Z$ that associates to an element $G-e$ of the edge deck $\mathcal{D}^e(G)$ of $G$ the number of non-backtracking (not necessarily closed) 
walks on $G$ of given length starting at $e$ (in any direction) is edge-reconstructible.
\end{theorem}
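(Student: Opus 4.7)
My plan is to parallel the two-step proof of Theorem \ref{main}(iii)(a): first reduce to the range $r<|E|$ via the Jordan normal form of $T$, then carry out a Kelly-style combinatorial reconstruction for small $r$.

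For the first step, I would write $M_r(\ra{e})=x_{\ra{e}}^{\intercal}T^r\mathbf{1}$, where $\mathbf{1}$ denotes the all-ones column vector, and apply the Jordan decomposition $T=P\Lambda P^{-1}$ to obtain
\[
M_r(\ra{e})=\sum_{i=1}^N\sum_{k=0}^{M_i-1}\lambda_i^{r-k}\binom{r}{k}z_{i,k}(\ra{e})
\]
with an expansion of the same shape as (\ref{ttt}). The confluent alternant matrix is again invertible, so $M_r(\ra{e})$ (and hence $M_r(e)=M_r(\ra{e})+M_r(\la{e})$) for all $r$ is determined by its values at $r\leq M-1$; the bound $M-1\leq 2|V|<|E|$ established in the proof of (iii)(a) carries over verbatim under the assumption $\bar d>4$.

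For the second step, a non-backtracking walk of length $r<|E|$ starting at $e$ uses at most $r<|E|$ distinct edges, so it is supported on a proper subgraph of $G$. Grouping walks by their image yields
\[
M_r(e)=\sum_{[(H,e_0)]}Q_r(H,e_0)\cdot S^\bullet((H,e_0);(G,e)),
\]
summed over iso classes of edge-pointed graphs $(H,e_0)$ with $|E(H)|\leq r$, where $Q_r(H,e_0)$ counts non-backtracking walks of length $r$ on $H$ starting at $e_0$ (in either orientation) that traverse every edge of $H$, and $S^\bullet((H,e_0);(G,e))$ counts subgraphs $H'\subseteq G$ with $e\in H'$ and $(H',e)\cong(H,e_0)$. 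The key point is that $S^\bullet((H,e_0);(G,e))$ equals the number of vertex-pointed subgraphs of $(G-e,\{u,v\})$ isomorphic to $(H-e_0,\{u_0,v_0\})$, where $\{u,v\}$ are the endpoints of $e$ in $G$; since the degree sequence of $G$ is edge-reconstructible and the endpoints are precisely those vertices whose degree drops on deletion, the pair $\{u,v\}$ is recoverable from the deck element $G-e$, which makes $S^\bullet$ a combinatorial quantity intrinsic to the vertex-pointed graph $(G-e,\{u,v\})$.

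The main obstacle will be well-definedness of the formula on $\mathcal D^e(G)$: since the deck records only iso classes, one must ensure $S^\bullet$ is independent of the labelling chosen for $G-e$. This follows because the degree-drop condition characterizes the endpoints intrinsically, so all valid candidate pairs form a single $\mathrm{Aut}(G-e)$-orbit on which the vertex-pointed count $S^\bullet$ is automatically constant. The organizational tool hinted at in the introduction is the decomposition of a walk from $\ra{e}$ at its last visit to $e$ into a closed segment, ending at $\ra{e}$ or $\la{e}$, followed by a non-returning segment living in $G-e$; this gives a convolution-type formula for $Q_r(H,e_0)$ that meshes with the identities already used in the proofs of (iii)(a) and (v).
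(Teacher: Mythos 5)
Your first step---the Jordan-form reduction showing that $M_r(e)$ for all $r$ is determined by its values for $r \le M-1 < |E|$---coincides with the paper's. The gap is in your second step. To reconstruct $M_r(e)$ for $r < |E|$ you propose
\[
M_r(e)=\sum_{[(H,e_0)]}Q_r(H,e_0)\cdot S^\bullet((H,e_0);(G,e)),
\]
and argue that $S^\bullet$ can be read off from the deck element $G-e$ alone because ``the degree-drop condition characterizes the endpoints intrinsically, so all valid candidate pairs form a single $\mathrm{Aut}(G-e)$-orbit.'' That claim is unsubstantiated and false in general. From the deck you know $G-e$ only up to isomorphism and you know the degree sequence of $G$ as a multiset; this pins down the unordered pair of degrees $\{\deg_G u,\deg_G v\}$, but not which pair of vertices of $G-e$ realizes it, and different candidate pairs can sit in different $\mathrm{Aut}(G-e)$-orbits. (A quick illustration: if $e$ is a chord of a $6$-cycle, then $G-e\cong C_6$ is vertex-transitive, every vertex satisfies the degree-drop constraint, and vertex pairs at distance $1$, $2$, $3$ lie in three distinct $\mathrm{Aut}(C_6)$-orbits; nothing in your argument uses the hypothesis $\bar d>4$ to rule out such ambiguity.) So the edge-pointed subgraph count $S^\bullet$ is not intrinsic to the deck element, and your formula does not, as written, give reconstruction.

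The paper avoids pointing the subgraph on an edge entirely. It instead reconstructs the total number $W_r(e)$ of non-backtracking walks of length $r$ that merely \emph{pass through} $e$, via the plain Kelly-type formula
\[
W_r(e)=\sum_{H,\ |E(H)|\le r}Q_r(H)(S(H,G)-S(H,G-e)),
\]
which needs only \emph{unpointed} subgraph counts and is therefore deck-reconstructible for $r<|E|$. Then, introducing the counts $O_r(e)$ of non-returning walks, the decompositions ``walk through $e$ $=$ (non-returning in)$\cdot$(closed at $e$)$\cdot$(non-returning out)'' and ``walk starting at $e$ $=$ (closed at $e$)$\cdot$(non-returning out)'' give the generating-function identities $W(x)=x^2N(x)O(x)^2$ and $M(x)=xN(x)O(x)$, so $M(x)^2=W(x)N(x)$, i.e.\ $\sum_{i+j=r}M_i(e)M_j(e)=\sum_{i+j=r}W_i(e)N_j(e)$. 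Since $N_j(e)$ is already reconstructible by part (iii)(a) and $W_i(e)$ for $i<|E|$ by Kelly, this recursion recovers $M_r(e)$ for $r<|E|$, after which your step one extends to all $r$. Your closing paragraph does gesture at the closed/non-returning decomposition, but only as an aid to computing $Q_r(H,e_0)$ inside your formula; in the paper it is the central device that removes any need for $S^\bullet$, which is exactly the ingredient your argument is missing.
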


\begin{proof}
Let $M_r(\ra{e})$ denote the number of  non-backtracking walks of length $r$ that start in the direction of $\ra{e}$ (but do not necessarily return to $\ra{e}$). Then, similarly to the expression derived for $N_r(\ra{e})$ in the previous proof, we find

$$ M_r(\ra{e}) = x_{\ra{e}}^{\intercal} T^r \mathbf{1} = \sum_{i=1}^N \sum_{k=0}^{M_i-1} \lambda_i^{r-k} \binom{r}{k} y'_{i,k}, $$
where $\mathbf{1}$ is the $2|E|$-column vector consisting of all $1$'s and $y'_{i,k}$ is an expression similar to $y_{i,k}$ in the previous proof, but with the role of $v'$ taken by $\mathbf{1}$. 
Now  $$M_r(e) = M_r(\ra{e})+M_r(\la{e}) =  \sum_{i=1}^N \sum_{k=0}^{M_i-1} \lambda_i^{r-k} \binom{r}{k} (y'_{\ra{e},i,k}+y'_{\la{e},i,k}),$$ (where we have indicated the dependence of  $y'_{i,k}$ on the oriented edge $\ra{e}$ in the subscript) is the number of non-backtracking walks of length $r$ that start at $e$ in any direction.  The same reasoning as in the  proof of Lemma \ref{J} shows that is suffices to reconstruct $M_r(e)$ for $r < |E|$; namely, we find a matrix equation 
$$\mathbb V Y' = \mathbf{M}, $$
 where $Y'$ is a column vector consisting of $y'_{\ra{e},i,k}+y'_{\la{e},i,k}$, $\mathbf{M}$ is a column vector with entries $M_i(e)$ for $i=0,\dots,M-1$, and $\mathbb V$ is the same (invertible) matrix as in the previous proof. This shows that $Y'$, and hence $M_r(e)$ for all $r$, is determined by $M_r(e)$ for $r \leq M-1 < |E|$. 

Let $W_r(e)$ denote the total number of walks through the edge $e$. This number is reconstructible by Kelly's Lemma \ref{Kellys} for $r<|E|$, since 
$$ W_r(e)  = \sum_{\substack{H \in \mathcal{G}_{i} \\ i \leq r}} Q_r(H)(S(H,G)-S(H,G-e)), $$
where $Q_r(H)$ is the number of (not necessarily closed) walks of length $r$ that pass through every edge of $H$. 

Let $O_r(\ra{e})$ denote the number of walks of length $r$ starting at $\ra{e}$ that never return to $\ra{e}$ (but might go though $\la{e}$), and let $O_r(e)=O_r(\ra{e})+O_r(\la{e})$ denote the number of walks starting in $e$ but never return to $e$ in the same direction. We call these \emph{non-returning walks}. We then have the following relations (similar to the ones for vertex walks discussed in \cite{Godsil}, Formula (1)): 

\begin{enumerate}
\item Every walk of length $r$ through $e$ decomposes as a non-returning walk of length $i$ into $e$, then a closed walk of length $j$ through $e$, followed by a non-returning walk of length $k$ starting at $e$, for $r+2=i+j+k$ (see Figure \ref{ijk}). Hence 
\begin{equation} \label{w1} W_r(e) = \sum_{i+j+k=r+2} O_j(e) N_j(e) O_k(e). \end{equation}
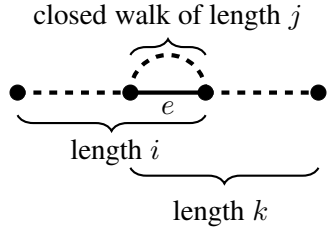
\begin{figure}[h] 
\begin{tikzpicture}
 \draw node at (0,-0.2) {$e$}; 
 \draw node at (-0.7,-0.8) {length $i$}; 
  \draw node at (0.7,-1.6) {length $k$}; 
    \draw node at (0,1) {closed walk of length $j$}; 
   \draw[ultra thick,dashed] (-2,0) -- (-0.5,0);
      \draw[ultra thick,dashed] (0.5,0) -- (2,0);
   \draw[ultra thick] (-0.5,0)--(0.5,0);
   \draw (-2,0)  node[circle, inner sep = 0pt, minimum height=2mm, draw, fill] {};
      \draw (-0.5,0)  node[circle, inner sep = 0pt, minimum height=2mm, draw, fill] {};
         \draw (0.5,0)  node[circle, inner sep = 0pt, minimum height=2mm, draw, fill] {};
           \draw (2,0)  node[circle, inner sep = 0pt, minimum height=2mm, draw, fill] {};
           
          \draw[ultra thick,dashed] (0.5,0) arc (0:180:0.5cm);

\draw [thick, decorate,decoration={brace, mirror, amplitude=6pt}] (-2,-0.3) -- (0.5,-0.3);
\draw [thick, decorate,decoration={brace, mirror, amplitude=6pt}] (-0.5,-1) -- (2,-1);
\draw [thick, decorate,decoration={brace, amplitude = 6pt}] (-0.5,0.5) -- (0.5,0.5);

\end{tikzpicture}
    \caption{Decomposition of a walk through $e$ of total length $i+j+k-2$}
    \label{ijk}
\end{figure} 
\item Every walk of length $r$ starting at $e$ decomposes as a closed walk of length $i$ followed by a non-returning walk of length $j$, where $i+j=r+1$. 
Hence 
\begin{equation} \label{w2} M_r(e) = \sum_{i+j=r+1} N_i(e) O_j(e). \end{equation}
\end{enumerate}
 If we express these relations (\ref{w1}) and (\ref{w2}) using generating series $W(x) = \sum W_r(e) x^r$, etc., they become 
$$ \left\{ \begin{array}{l} W(x) = x^2 N(x) O(x)^2 \\ M(x) = x N(x) O(x), \end{array} \right. $$
 from which we can eliminate $O(x)$, to find 
 $ M(x) = \sqrt{ W(x) N(x) },$ i.e., for all $r \geq 0$: 
 $$ \sum_{i+j=r} M_i(e) M_j(e) = \sum_{i+j=r} W_i(e) N_j(e). $$
 Since we have already reconstructed $N_j(e)$ for all $j$ and $W_i(e)$ for all $i<|E|$, we can use this formula to reconstruct recursively the values $M_r(e)$ for all $r<|E|$. This suffices to reconstruct $M_r(e)$ for all integers $r$. \end{proof} 
 
 Similar to Proposition \ref{BP}, we get
\begin{proposition} If $G$ is bipartite of average degree $\bar d \geq 4$, the function $M_r$ is edge-reconstructible for all $r>0$. \qed
\end{proposition}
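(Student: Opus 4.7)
The plan is to revisit the proof of Theorem \ref{main}(iii)(b) and observe that the only place where the strict inequality $\bar d>4$ was needed is the bound $M-1<|E|$ derived from Equation \eqref{bip}; everything else in the argument only requires $N_r(e)$ to be known for all $r$ and $W_r(e)$ to be known for $r<|E|$. In the bipartite case, both ingredients become available already at $\bar d=4$, so the result should follow by tracking the improvement in the Jordan/semi-simple part of $T$ that bipartiteness provides.

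More precisely, first I would refine the bound on $M$, the sum of the maximal Jordan block sizes over the distinct eigenvalues of $T$. By Corollary \ref{mul}, the eigenvalues $\pm 1$ are semi-simple, and if $G$ is bipartite ($p=0$), they have respective multiplicities $|E|-|V|+1$ and $|E|-|V|+1$. Hence both contribute $M_i=1$ to $M$, and the remaining eigenvalues contribute at most the remaining $2|E|-2(|E|-|V|+1)=2|V|-2$ dimensions, so
\[ M-1 \;\leq\; 2+(2|V|-2)-1 \;=\; 2|V|-1. \]
Using the hypothesis $\bar d = 2|E|/|V|\geq 4$, i.e.\ $|E|\geq 2|V|$, this gives $M-1 \leq 2|V|-1 \leq |E|-1 < |E|$, matching the inequality used in the proof of Theorem \ref{main}(iii)(b).

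Second, with the confluent alternant argument (Lemma \ref{J}, applied now to $M_r(e)$ as in the proof of Theorem \ref{main}(iii)(b)), this bound reduces the reconstruction of $M_r(e)$ for all $r$ to the reconstruction of $M_r(e)$ for $r<|E|$, provided the spectrum of $T$ (i.e.\ $\mathbb{V}$) is edge-reconstructible; this holds by Theorem \ref{main}(i) since $\bar d\geq 4$.

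Third, to reconstruct $M_r(e)$ for $r<|E|$ I would repeat verbatim the end of the proof of Theorem \ref{main}(iii)(b): the total walk count $W_r(e)$ is reconstructible for $r<|E|$ by the edge version of Kelly's Lemma; the closed walk count $N_r(e)$ is reconstructible for all $r$ by Proposition \ref{BP} (this is exactly where bipartiteness at $\bar d=4$ is needed a second time); and the generating-series identity $M(x)^2=W(x)N(x)$ that comes from eliminating the non-returning walks $O(x)$ between \eqref{w1} and \eqref{w2} then determines $M_r(e)$ recursively from $M_0(e),\ldots,M_{r-1}(e)$ together with the known $W_i(e), N_j(e)$. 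There is no real obstacle here; the only subtle point worth double-checking is that the base case of the recursion ($r=0$) and the square-root extraction of the generating series remain well-defined in the bipartite setting, but this is identical to the general case already handled.
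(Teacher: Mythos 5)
Your proposal is correct and matches the paper's intended argument, which simply refers back to Proposition~\ref{BP}: bipartiteness raises the multiplicity of $-1$ to $|E|-|V|+1$, tightening the bound~\eqref{bip} to $M-1\leq 2|V|-1$, which is $<|E|$ already at $\bar d=4$; the rest of the proof of Theorem~\ref{main}(iii)(b) (the confluent alternant reduction, Kelly's Lemma for $W_r(e)$, Proposition~\ref{BP} for $N_r(e)$, and the identity $M(x)^2=W(x)N(x)$) then goes through verbatim. You correctly identify both places where $\bar d>4$ was used and where bipartiteness compensates.
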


 \section{Reconstruction of the Perron-Frobenius eigenvector of $T$}
\begin{notation} Let $\mathbf{p}$ denote the normalized Perron-Frobenius eigenvector corresponding to the (simple) Perron-Frobenius eigenvalue $\lambda_{\PF}$ of $T$, where the normalization is given by $$\langle \mathbf{p}, \mathbf{p} \rangle = \mathbf{p}^{\intercal} J \mathbf{p} = 1$$ in terms of the indefinite metric $\langle \cdot, \! \cdot \rangle$ from Definition \ref{haakje}. Spelled out in coordinates, this means that $\mathbf{p}$ is normalized by 
\begin{equation} \label{norm} 2 \sum_{e \in E} \mathbf{p}_{\ra{e}} \mathbf{p}_{\la{e}} = 1. \end{equation}
\end{notation}

\begin{theorem}[Theorem \ref{main}\textup{(iv)}] \label{recp} Let $G$ denote a graph of average degree $\bar d>4$. Then for any symmetric polynomial $f$ of two variables, 
the function $\mathcal{D}^e(G) \rightarrow \R \colon G-e \mapsto  f(\mathbf{p}_{\ra{e}},\mathbf{p}_{\la{e}})$ is edge-reconstructible.
 In particular, the unordered pairs $\{\mathbf{p}_{\ra{e}},\mathbf{p}_{\la{e}}\}$ are edge-reconstructible. 
\end{theorem}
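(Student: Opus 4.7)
The plan is to reconstruct, for each edge $e$, the two elementary symmetric functions
$$p_e := \mathbf{p}_{\ra{e}}\mathbf{p}_{\la{e}} \quad\text{and}\quad s_e := \mathbf{p}_{\ra{e}}+\mathbf{p}_{\la{e}}$$
of the pair of PF-entries attached to $e$. The unordered pair $\{\mathbf{p}_{\ra{e}},\mathbf{p}_{\la{e}}\}$ is then the multiset of roots of $x^2 - s_e x + p_e$, and any symmetric polynomial $f(\mathbf{p}_{\ra{e}},\mathbf{p}_{\la{e}})$ is a polynomial in $s_e$ and $p_e$; the well-definedness of the corresponding function on $\mathcal{D}^e(G)$ (cf.\ Remark \ref{mn}) is precisely the symmetry requirement on $f$.

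The main tool is the Ces\`aro ergodic theorem for the non-negative irreducible matrix $T$:
$$\frac{1}{N}\sum_{r=0}^{N-1}\frac{T^r}{\lambda_{\PF}^r}\ \longrightarrow\ \Pi,$$
where $\Pi$ is the rank-one spectral projector onto the Perron line. Proposition \ref{krein} gives $T^\intercal = JTJ$, so that $J\mathbf{p}$ is the left PF-eigenvector; the normalization $\mathbf{p}^\intercal J \mathbf{p} = 1$ then yields the explicit form $\Pi = \mathbf{p}\,\mathbf{p}^\intercal J$. Since $J$ swaps the two orientations of each edge, the $2 \times 2$ block of $\Pi$ indexed by $\ra{e},\la{e}$ has diagonal entries equal to $p_e$ and off-diagonal entries equal to $\mathbf{p}_{\ra{e}}^{2}$ and $\mathbf{p}_{\la{e}}^{2}$. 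In particular, using $N_r(e) = 2(T^r)_{\ra{e},\ra{e}}$,
$$p_e = \lim_{N\to\infty}\frac{1}{2N}\sum_{r=0}^{N-1}\frac{N_r(e)}{\lambda_{\PF}^r},$$
which is edge-reconstructible by Corollary \ref{recpf} and Theorem \ref{main}(iii)(a).

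For $s_e$ I would use the non-closed walk counts: since $M_r(\ra{e}) = x_{\ra{e}}^\intercal T^r \mathbf{1}$, the same Ces\`aro limit applied to $(x_{\ra{e}}+x_{\la{e}})^\intercal T^r \mathbf{1}/\lambda_{\PF}^r$ yields
$$ \frac{1}{N}\sum_{r=0}^{N-1}\frac{M_r(e)}{\lambda_{\PF}^r}\ \longrightarrow\ C\cdot s_e,\qquad \frac{1}{N}\sum_{r=0}^{N-1}\frac{\sum_{e'\in E} M_r(e')}{\lambda_{\PF}^r}\ \longrightarrow\ C^2,$$
where $C := \mathbf{1}^\intercal \mathbf{p} = \mathbf{p}^\intercal J \mathbf{1} > 0$. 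The second limit is reconstructible by summing Theorem \ref{main}(iii)(b) over the edge deck and determines $C$ as the positive square root of $C^2$; dividing the first limit by $C$ then recovers $s_e$ edge by edge.

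The main obstacle in this plan is the identification of the Perron projector in the $J$-symmetric setting: without the $J$-twist, the Ces\`aro limit would only see the diagonal of $\Pi$ (equivalently, the products $p_e$), and the sum $s_e$ would remain inaccessible from edge-deck data. It is the asymmetry $\Pi_{\ra{e},\ra{e}}\neq \Pi_{\ra{e},\la{e}}$ produced by $J$ that makes the constant vector $\mathbf{1}$ a useful ``test vector'' against which $\mathbf{p}$ can be paired through the reconstructible walk counts $M_r$, providing the ingredient complementary to $N_r$.
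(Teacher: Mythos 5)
Your proposal is correct and takes essentially the same route as the paper: Ces\`aro averages of $N_r(e)/\lambda_{\PF}^r$ give the product $\mathbf{p}_{\ra{e}}\mathbf{p}_{\la{e}}$, Ces\`aro averages of $M_r(e)/\lambda_{\PF}^r$ give $C\cdot(\mathbf{p}_{\ra{e}}+\mathbf{p}_{\la{e}})$ with $C$ recovered as the non-negative square root of $\sum_e C\cdot(\mathbf{p}_{\ra{e}}+\mathbf{p}_{\la{e}})$, and the $J$-symmetry $T^\intercal=JTJ$ yields the Perron projector $\mathbf{p}\mathbf{p}^\intercal J$. The paper's $\pi_e,\sigma_e,\tilde\sigma_e,\alpha$ correspond to your $p_e,s_e,Cs_e,C$, and the rest of the argument is identical.
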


\begin{proof} It suffices to prove this for $f$ equal to one of the elementary symmetric functions $$\sigma_e := \mathbf{p}_{\ra{e}} + \mathbf{p}_{\la{e}} \mbox{ and } \pi_e:=\mathbf{p}_{\ra{e}}\cdot \mathbf{p}_{\la{e}}.$$ 

The result follows from Perron-Frobenius theory for non-negative matrices (see, e.g., section 8.3 in \cite{Meyer}), as follows. 
Since $T$ is a non-negative irreducible matrix (cf.\ \ref{pfnot}), the so-called Ces\`aro averages of $T$, defined as the left hand side in Equation (\ref{Cesa}), are given by 
\begin{equation} \label{Cesa} \lim_{k \rightarrow +\infty} \frac{1}{k} \sum_{r=0}^{k-1} \frac{T^r}{\lambda_{\PF}^r} = \frac{\mathbf{p}\mathbf{q}^{\intercal}}{\mathbf{q}^{\intercal} \mathbf{p}}, \end{equation} 
where $\mathbf{p}$ and $\mathbf{q}$ are Perron-Frobenius eigenvectors of $T$ and $T^{\intercal}$, respectively (\cite{Meyer} 8.3.2). Notice that $\mathbf{p}$ and $\mathbf{q}$ are determined up to scaling, but different choices do not change the right hand side of the equation. 
Now Formula (\ref{Tss}) implies the following equivalence between left and right eigenvectors $v$ for $T$:  $$Tv=\lambda v \iff v^{\intercal} J T = \lambda v^{\intercal} J.$$ 
Since from $T^{\intercal} \mathbf{q} = \lambda_{\PF} \mathbf{q}$, it follows that $\mathbf{q}^{\intercal} T = \lambda_{\PF} \mathbf{q}^{\intercal}$, we can set $\mathbf p$ to be normalized and $\mathbf q = J \mathbf p$. Hence the expression in (\ref{Cesa}) becomes 
\begin{equation} \label{Cesa2} \lim_{k \rightarrow +\infty} \frac{1}{k} \sum_{r=0}^{k-1} \frac{T^r}{\lambda_{\PF}^r} = \mathbf{p}\mathbf{p}^{\intercal} J, \end{equation} 
since $\mathbf{p}$ is normalized as in (\ref{norm}). 

It follows that 
\begin{equation} \label{PFfs} \lim_{k \rightarrow +\infty} \frac{1}{2k} \sum_{r=0}^{k-1} \frac{N_r(e)}{\lambda_{\PF}^r} = \lim_{k \rightarrow +\infty} \frac{1}{k} \sum_{r=0}^{k-1} \frac{x_{\ra{e}}^{\intercal} T^r x_{\ra{e}}}{\lambda_{\PF}^r}  = \mathbf{p}_{\ra{e}} \mathbf{p}_{\la{e}}=\pi_e. 
\end{equation}
Similarly, we have 
\begin{align} \label{mff} \lim_{k \rightarrow +\infty} \frac{1}{k} \sum_{r=0}^{k-1} \frac{M_r(e)}{\lambda_{\PF}^r} 
& = \lim_{k \rightarrow +\infty} \frac{1}{k} \sum_{r=0}^{k-1} \frac{ x_{\ra{e}}^{\intercal} T^r \mathbf{1}+x_{\la{e}}^{\intercal} T^r \mathbf{1} }{\lambda_{\PF}^r}  \\ & = (\mathbf{p}_{\ra{e}}+\mathbf{p}_{\la{e}}) \sum_{e' \in E} (\mathbf{p}_{\ra{e'}}+\mathbf{p}_{\la{e'}}) =: \tilde\sigma_e, \nonumber \end{align}
with $$ \tilde \sigma_e = \alpha \sigma_e \mbox{ for } \alpha =  \sum_{e' \in E} \sigma_{e'}. $$ 
Hence the numbers $\tilde \sigma_{{e}}$ and $\pi_e$ can be reconstructed from $\mathcal{D}^e(G)$, since the left hand side of the above formulas (\ref{PFfs}) and (\ref{mff}) can. Since the entries of the Perron-Frobenius eigenvector are all non-negative, we find that $\alpha$
is positive. Adding up all terms in (\ref{mff}), we find that 
$$  \sum_{e \in E} \tilde \sigma_e= \alpha^2,$$
hence $\alpha \geq 0$ is determined, and so also $\sigma_e=\tilde \sigma_e/\alpha$ is edge-reconstructible. The final statement follows since the elements of the unordered pair $\{\mathbf{p}_{\ra{e}},\mathbf{p}_{\la{e}}\}$ are the roots of $x^2-\sigma_e x + \pi_e = 0$. 
\end{proof} 

Similar to Proposition \ref{BP}, we get
\begin{proposition} If $G$ is bipartite of average degree $\bar d \geq 4$, the unordered pairs $\{\mathbf{p}_{\ra{e}}, \mathbf{p}_{\la{e}} \}$  are edge-reconstructible for all $r>0$. \qed
\end{proposition}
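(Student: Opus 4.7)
The plan is to rerun the proof of Theorem \ref{recp} verbatim, noting that its only appeal to the strict inequality $\bar d > 4$ is through the edge-reconstructibility of the sequences $\{N_r(e)\}_{r \geq 0}$ and $\{M_r(e)\}_{r \geq 0}$. Once those two sequences are known, the entire argument is a sequence of Ces\`aro-limit computations with the Perron--Frobenius eigenvalue, which we can invoke under the weaker hypothesis $\bar d \geq 4$ by Corollary \ref{recpf}. So the task reduces to supplying $N_r(e)$ and $M_r(e)$ for all $r$ in the bipartite setting.

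For bipartite $G$ with $\bar d \geq 4$, this is exactly what Proposition \ref{BP} and its $M_r$-counterpart (the proposition immediately following the reconstruction theorem for $M_r$) deliver. Their proofs trace back to the bound $M-1 < |E|$ in equation (\ref{bip}): in the bipartite case the eigenvalue $-1$ has multiplicity $|E|-|V|+1$ rather than $|E|-|V|$ (Corollary \ref{mul} with $p=0$), so the bound sharpens from $M-1 \leq 2|V|$ to $M-1 \leq 2|V|-1$, which is strictly less than $|E|$ precisely when $\bar d \geq 4$. Hence the Jordan-form argument of Lemma \ref{J} combined with Kelly's Lemma reconstructs $N_r(e)$ and $M_r(e)$ for all $r$.

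With these inputs in place, I would then substitute them into the two Ces\`aro formulas
\begin{equation*} \lim_{k \to \infty} \frac{1}{2k}\sum_{r=0}^{k} \frac{N_r(e)}{\lambda_{\PF}^r} = \pi_e, \qquad \lim_{k \to \infty} \frac{1}{k}\sum_{r=0}^{k} \frac{M_r(e)}{\lambda_{\PF}^r} = \tilde\sigma_e, \end{equation*}
recover the normalization constant from $\sum_{e \in E} \tilde\sigma_e = \alpha^2$ with $\alpha \geq 0$, read off $\sigma_e = \tilde\sigma_e/\alpha$, and finally extract the unordered pair $\{\mathbf{p}_{\ra{e}},\mathbf{p}_{\la{e}}\}$ as the two roots of $x^2 - \sigma_e x + \pi_e = 0$.

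There is no real obstacle here: the only subtlety is making sure that no step silently reused the strict inequality $\bar d > 4$ in a way that cannot be salvaged by the bipartite improvement. The most delicate point is the Jordan-block count behind Lemma \ref{J}, but this is already taken care of by the sharpened multiplicity of $-1$, so the entire proof of Theorem \ref{recp} transfers without modification.
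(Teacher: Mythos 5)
Your proposal is correct and matches the paper's intended argument exactly: the paper leaves this proposition with a bare \qed because, as you observe, the only use of $\bar d > 4$ in Theorem~\ref{recp} is to reconstruct $N_r(e)$ and $M_r(e)$, and the bipartite sharpening of the Jordan bound (multiplicity of $-1$ becomes $|E|-|V|+1$, so $M-1 \leq 2|V|-1 < |E|$ when $\bar d \geq 4$) supplies those inputs via Proposition~\ref{BP} and its $M_r$-analogue, after which the Ces\`aro-limit extraction of $\pi_e$, $\tilde\sigma_e$, and $\alpha$ is unchanged.
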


\appendix

\section{Some results on multigraph edge reconstruction \\  \normalfont by \textsc{Daniel C.\ McDonald}}{\let\thefootnote\relax\footnote{{The subject of edge-reconstructibility of nonsimple multigraphs was broached by Matthew Yancey in the Structure of Graphs course taught by Alexandr Kostochka in Spring 2010 at Illinois, leading to a formulation of Conjecture \ref{ercm}. The appendix contains some of the results obtained during a 2010 REGS in Combinatorics organized by Douglas B.\ West at the University of Illinois, funded by National Science Foundation grant DMS 08-38434 ``EMSW21- MCTP: Research Experience for Graduate Students''.}}}
A multigraph is \emph{simple} if it has no loops or multiedges. Given a multigraph $M$, we will call a simple graph $G$ the \emph{underlying simple graph of $M$} if $G$ is obtained by replacing all multiedges of $M$ with edges of multiplicity 1.

\begin{conjecture}[Reconstruction Conjecture on nonsimple multigraphs] \label{ercm} Every nonsimple multigraph with more than two edges is edge-reconstructible.
\end{conjecture} 
Multigraphs in which every multiedge has the same multiplicity $m$ have the same vertex-deleted subgraphs as their underlying simple graphs, except each edge is replaced by a multiedge of multiplicity $m$, so the Reconstruction Conjecture on nonsimple multigraphs is more difficult than the Reconstruction Conjecture on simple graphs. 

For simplicity's sake $M$ will henceforth refer to a finite loopless nonsimple multigraph with at least three edges and no isolated vertices, and $G$ will always refer to the underlying simple graph of $M$.
For $m \geq 1$ we will refer to a multiedge of multiplicity $m$ as an $m$-edge. We will use the term edge when referring to a $1$-edge or an individual edge making up part of a larger multiedge, and we will refer to multiedges of multiplicity at least $2$ as nontrivial multiedges.
The underlying simple graph $G$ of $M$ is an edge-constructible property ($G$ will be the underlying simple graph of any card with the maximum number of $1$-edges) and therefore classes of multigraphs defined by underlying simple graph structure are edge-recognizable. For a multigraph $Q$ with fewer edges than $M$, counting arguments show that the parameter $S_Q(M)$ is edge-reconstructible, where $S_Q(M)$ counts the times $Q$ appears as a subgraph of $M$.
For a set $X = \{Q_1,\dots,Q_k\}$ of multigraphs let $S_{Q_i}^X(M)$ denote the number of times $Q_i$ appears in
$M$ not as a subgraph of any $Q_j$ for $i \neq j$.
\begin{lemma} Let $X = \{Q_1,\dots,Q_k\}$ be a set of multigraphs each with fewer edges than $M$. Suppose
it can be verified from the deck of $M$ that for any (not necessarily distinct) $a, b, c$, if a copy of $Q_a$ is
contained in the intersection of a copy of $Q_b$ and a copy of $Q_c$, then for some $d$ (potentially one of
$a,b,c$), those copies of $Q_b$ and $Q_c$ are contained in some copy of $Q_d$. Then the parameter $S_{Q_i}^X (M)$ 
is edge-reconstructible.
\end{lemma}
\begin{proof} Without loss of generality assume that $Q_1,\dots,Q_k$ are ordered first by decreasing number
of vertices, then by decreasing number of edges. Then $S_{Q_1}^X (M) = S_{Q_1}(M).$ Now let $j > 1$ and assume that for each $i < j$ the value of $S_{Q_i}^X (M)$ has been computed. Then 
\[  S_{Q_j}^X (M) = S_{Q_j} (M) - \sum_{i=1}^{j-1} S_{Q_i}^X(M) S_{Q_j}(Q_i). \qedhere \] \end{proof} 

\begin{corollary} \label{appcor} \mbox{ } 
\begin{enumerate}
\item \label{c1} For a multigraph $Q$ with fewer edges than $M$, the parameter $S_Q^* (M )$ is edge-reconstructible, where $S_Q^* (M)$ counts the times $Q$ appears as an induced subgraph of $M$.
\item \label{c2} The multiset of multiplicities of the multiedges of $M$ is edge-reconstructible.
\item \label{c3} The multiset consisting of, for each vertex $v$ of $M$, the multiset of multiplicities of multiedges incident to $v$ is edge-reconstructible.
\item \label{c4} If $M$ is disconnected, then $M$ is edge-reconstructible.
\end{enumerate} 
\end{corollary}
\begin{proof}
(\ref{c1}) Suppose $Q$ is a multigraph with fewer edges than $M$. If $Q$ has the same number of vertices as $M$ then $S_Q^* (M) = 0$, so assume $Q$ has fewer vertices than $M$. Let $X$ be the set of all subgraphs of $M$ with the same number of vertices as $Q$. Then the lemma applies, and $S_Q^* (M) = S_Q^X(M).$

(\ref{c2}) We can calculate $S_Q^* (M)$ for any multiedge $Q$ if $M$ has at least $3$ vertices, and otherwise $M$
is a multiedge with multiplicity equal to the number of cards in the deck.

(\ref{c3}) Let $A$ be the desired multiset. If $G$ is a star then $M$ can be drawn by letting its multiedges all share a common vertex. Otherwise, let $X$ be the set of all subgraphs of $M$ whose underlying simple graphs are stars containing at least $3$ vertices. Then the lemma applies and accounts for all elements of $A$ except for the singletons containing the multiplicity of a multiedge reaching a leaf of $G$. If $e$ is an $m$-edge, and $m$ appears $k$ times total in the non-singleton multisets of $A$, then $\{m\}$ should appear $2S_e^*(M) - k$ times in $A$.

(\ref{c4}) Let $X$ be the set of all connected subgraphs of $M$. Then the lemma applies, and $M$ is the multigraph whose multiset of components contains precisely, for each $Q \in X$, $S_Q^X(M)$ copies of $Q.$
\end{proof}


\medskip 

{\footnotesize \noindent \emph{Appendix author address} \\
Wolfram Research \\
100 Trade Center Drive \\
Champaign, IL 61820-7237 \\
USA
{\tt  dmcdonald@wolfram.com} 
}

\bibliographystyle{amsplain}

\end{document}